%version v6 - Version taking minor proofs corrections into account. Created on Jan 11, 2017.
%version v5 - Revised version, to appear in the Mathematical Reports of the Canadian Academy of Science. Created on Jan 31, 2016.
%version v4 - Created in fall 2015 to put on the arXiv and submit to Mathematical Reports of the Canadian Academy of Science. Main revisions: Addition to the introduction of applications to integral geometry (\S 1.2).
%version v3 - created on Oct 30, 2013 to submit to GAFA seminar notes. Main revision: abstract, following Semyon's geometric suggestions.
%version v2 - created on Oct 22, 2013 and sent to Semyon for GAFA seminar notes.

\documentclass[11pt,a4paper]{amsart}%
\usepackage{amsmath}
\usepackage{amsfonts}
\usepackage{amssymb}
\usepackage{hyperref}
\usepackage{enumerate,color}
\usepackage{graphicx}%
\usepackage[ngerman,french,english]{babel}
\usepackage[T1]{fontenc}
\setcounter{MaxMatrixCols}{30}
\textwidth 14cm
%TCIDATA{OutputFilter=latex2.dll}
%TCIDATA{Version=5.50.0.2960}
%TCIDATA{Codepage=1252}
%TCIDATA{CSTFile=amsart.cst}
%TCIDATA{Created=Fri Aug 31 10:26:33 2001}
%TCIDATA{LastRevised=Monday, March 04, 2013 09:26:13}
%TCIDATA{<META NAME="GraphicsSave" CONTENT="32">}
%TCIDATA{<META NAME="SaveForMode" CONTENT="1">}
%TCIDATA{BibliographyScheme=Manual}
%TCIDATA{<META NAME="DocumentShell" CONTENT="Journal Articles\Standard LaTeX Article">}
%TCIDATA{Language=American English}
%BeginMSIPreambleData
\providecommand{\U}[1]{\protect\rule{.1in}{.1in}}
%EndMSIPreambleData
\newtheorem{theorem}{Theorem}[section]
\newtheorem{thm}[theorem]{Theorem}

\newtheorem{corollary}[theorem]{Corollary}
\newtheorem{cor}[theorem]{Corollary}

\newtheorem{defn}[theorem]{Definition}
\newtheorem{example}[theorem]{Example}

\newtheorem{lemma}[theorem]{Lemma}
\newtheorem{lem}[theorem]{Lemma}

\newtheorem{proposition}[theorem]{Proposition}
\newtheorem{remark}[theorem]{Remark}

\newcommand{\C}{\mathbb{C}}
\newcommand{\cO}{\mathcal{O}}
\newcommand{\eps}{\varepsilon}
\newcommand{\R}{\mathbb{R}}
\newcommand{\RR}{\mathbb{R}}
\newcommand{\CC}{\mathbb{C}}

\newcommand{\N}{\mathbb{N}}

\newcommand{\diag}{\operatorname{diag}}

\newcommand{\GL}{\operatorname{GL}}

\def\eps{\varepsilon}
\def\alp{\alpha}

\newcommand{\Dima}[1]{{{#1}}}
\newcommand{\DimaA}[1]{{{#1}}}

\begin{document}

\author{Dmitry Gourevitch}
\address{The Incumbent of Dr. A. Edward Friedmann Career Development Chair in Mathematics, Department of Mathematics, Weizmann
Institute of Science, POB 26, Rehovot 76100, Israel }
\email{dimagur@weizmann.ac.il}
\urladdr{\url{http://www.wisdom.weizmann.ac.il/~dimagur}}
% \author{Siddhartha Sahi}
% \address{Siddhartha Sahi, Department of Mathematics, Rutgers University, Hill Center -
% Busch Campus, 110 Frelinghuysen Road Piscataway, NJ 08854-8019, USA}
% \email{sahi@math.rugers.edu}

%       46F12   Integral transforms in distribution spaces
%       22Fxx           Noncompact transformation groups
%       22F30   Homogeneous spaces
%       22E30   Analysis on real and complex Lie groups
%   22E50       Representations of Lie and linear algebraic groups over local fields
%   53C65       Integral geometry
\keywords{Degenerate principal series, $\alp$-cosine transform, Bernstein-Zelevinsky derivative, Line bundle over Grassmannian. \\
\indent 2010 MS Classification: 46F12, 53C65, 22F30, 22E30, 22E50.}

\date{\today}
\title{Composition series for degenerate principal series of $\mathrm{GL}(n)$ }

\begin{abstract}
In this note we consider representations of the group $GL(n,F)$, where $F$ is the field of real or complex numbers
or, more generally, an arbitrary local field, in the space of equivariant line bundles over Grassmannians over the same field $F$.
We study reducibility and composition series of such representations.

Similar results were obtained already in \cite{HL,alesker-alpha,Zel}, but we give a short uniform proof in the general case, using the tools from \cite{AGS}. We also indicate some applications to cosine transforms in integral geometry.

% For non-archimedean F of arbitrary characteristic we show that there are finitely many reducibility points, and the reducible representations have length 2.

\end{abstract}

\maketitle

\begin{otherlanguage}{french}
\begin{abstract}
Dans cette note on consid\`{e}re des repr\'{e}sentations du groupe $GL(n,F)$, o\`{u} $F$ est le corps des nombres r\'{e}els ou complexes ou plus g\'{e}n\'{e}ralement,
un corps local arbitraire, dans l'espace de fibres en droites \'{e}quivariants sur des Grassmanniennes sur le m\^{e}me corps $F$.
On \'{e}tudie la r\'{e}ductibilit\'{e} et la suite de composition de telles repr\'{e}sentations.

Les r\'{e}sultats similaires ont d\'{e}j\`{a} \'{e}t\'{e} obtenus dans \cite{HL,alesker-alpha,Zel}, mais nous
pr\'{e}sentons une courte preuve dans le cas g\'{e}n\'{e}ral en utilisant les outils de \cite{AGS}.
On donne aussi quelques applications aux transform\'{e}es en cosinus en g\'{e}om\'{e}trie int\'{e}grale.

\end{abstract}
\end{otherlanguage}

\section{Introduction}
Let $F$ be a local field, {\it i.e.} either $\R$ or $\C$ or a finite extension of the field $\mathbb{Q}_p$ of $p$-adic numbers for some prime $p$, or the field $\mathbb{F}_q((t))$ of Laurent series over a finite field.
Let $G=G_{n}:=GL(n,F)$ be the group of invertible matrices of size $n$ with entries in $F$.

Then $C^{\infty}\left(
G\right)  $ is a $G\times G$-module with left and right actions given by
$L_{g}f\left(  x\right)  =f\left(  g^{-1}x\right)  $, $R_{g}f\left(  x\right)
=f\left(  xg\right)  $. Let $P\subset G$ be an \DimaA{algebraic}  subgroup with modular
function $\Delta_{P}$ and let $\chi$ be a character of $P$. The induced
representation $I\left(  P,\chi\right):=Ind_{P}^{G}(\chi)  $ is the right $G$-action on the space%
\begin{equation}
C^{\infty}\left(  G,P,\chi\right)  :=\left\{  f\in C^{\infty}\left(  G\right)
\mid L_{p^{-1}}f=\chi\left(  p\right)  \Delta_{P}^{1/2}\left(  p\right)
f\text{ for all }p\in P\right\}, \label{=CG}%
\end{equation}
whose elements may also be regarded as smooth sections of a line bundle on
$G/P$.
\DimaA{If $F$ is not $\mathbb{R}$ and not $\mathbb{C}$ then smooth means locally constant.}
We are  interested in the so-called maximal parabolic
subgroups $P=P_{p_{1},p_{2}}$, with $p_{1}+p_{2}=n$, consisting of matrices
$x\in G_{n}$ of the form
\begin{equation}
x=\left[
\begin{array}
[c]{ll}%
x_{11} & x_{12}\\
0 & x_{22}%
\end{array}
\right]  ;x_{ij}\in Mat_{p_{i}\times p_{j}}\text{.} \label{=x}%
\end{equation}
In this case $G/P$ is the Grassmannian of $p_{1}$-dimensional subspaces of
$F^{n}$. The characters of $P$ are of the form $\chi_{1}\otimes\chi_{2}\left(
x\right)  =\chi_{1}\left(  x_{11}\right)  \chi_{2}\left(  x_{22}\right)  $,
where $\chi_{i}$ is a character of $G_{p_{i}}$. Following \cite{BZ-Induced} we
write $\chi_{1}\times\chi_{2}$ instead of $I\left(  P,\chi_{1}\otimes\chi
_{2}\right)  $.

It is well known that the representation $\chi_{1}\times\chi_{2}$ is irreducible for most characters $\chi_1,\chi_2$. We explicitly describe the exceptional characters for which  $\chi_{1}\times\chi_{2}$ is reducible, and show that the composition series are monotone in a certain sense. We also show that if $F$ is non-archimedean, the length of $\chi_{1}\times\chi_{2}$ is at most two, {\it i.e.} it has at most one non-trivial $G$-invariant subspace.

In \S \ref{subsec:main} below we describe the main results precisely. In \S \ref{subsec:IntGeo}
below we give applications to integral geometry.

Most of the results are known, see \S \ref{subsec:known}. However, we give short proofs that work uniformly for all fields. Our main tool\DimaA{s} are the Bernstein-Zelevinsky derivatives (\cite{BZ-Induced}) and their archimedean analogs developed in \cite{AGS,AGS2}. We describe these, as well as other preliminaries, in \S \ref{sec:prel}.

\subsection{Main results}\label{subsec:main}
We fix some notation to describe our main results succinctly. For $z\in F$ let
$\nu\left(  z\right)  $ denote the positive scalar by which the additive Haar
measure on $F$ transforms under multiplication by $z$. We will also regard
$\nu$ as a character of $G_{k}$ defined by $\nu(g):=\nu_k(g):=\nu(\det g)$, and we note
that the modular function of $P=P_{p_{1},p_{2}}$ is $\Delta_{P}=\nu^{p_{2}%
}\otimes\nu^{-p_{1}}$. \Dima{This means that for $x$ as in  \eqref{=x} we have $\Delta_{P}(x)=\DimaA{\nu(x_{11})^{p_2}\nu(x_{22})^{-p_1}}$.}
We also let $1$ denote the trivial character of $G_k$, $\eps$ denote the
 $sgn\left(  \det\right)  $ character of $\GL_{k}\left(
\mathbb{R}\right),$ and $\alpha$ denote the character $\det |\det|^{-1}$ of  $\GL_{k}\left(
\mathbb{C}\right).$ Twisting by a character of $G_n$ we can assume $\chi_2=1$ and consider $\chi \times 1$. Let $r:=\min(p_1,p_2)$. \Dima{In \S \ref{sec:Pf} we prove the following statements.}

\begin{thm}
\label{thm:irr} $\chi \times 1$ is reducible if and only if one of
the following holds

\begin{enumerate}
[(i)]

\item \label{it:irrmixed} $\chi=\nu^{\pm(k-n/2)}$, where $k$ is an
integer, and $0\leq k\leq r-1$.

\item \label{it:irrR}$F=\mathbb{R}$, and $\chi=\eps^{k+1}\nu^{\pm(k-r+n/2})$ for some positive integer $k$.
%$|s_{1}-s_{2}|-|p_{1}-p_{2}|/2$ is a
%positive integer of parity $\varepsilon_{1}-\varepsilon_{2}+1$.

\item \label{it:irrR2}$F=\mathbb{R}, \, r>1$ and
$\chi=\nu^{\pm(k-r+n/2+1)}$ or $\chi=\eps\nu^{\pm(k-r+n/2+1)}$ for some positive integer $k$.

\item \label{it:irrC}$F=\mathbb{C}$, and either $\chi$ or $\chi^{-1}$ equals $\alpha^k\nu^s$ where $s-n/2$ and $k$ are integers and $s+k,s-k>n/2-r.$
%either $\min(u_{1}-u_{2},v_{1}-v_{2})>|p_{1}-p_{2}|/2$ or $\max(u_{1}%
%-u_{2},v_{1}-v_{2})<-|p_{1}-p_{2}|/2$.
\end{enumerate}
\end{thm}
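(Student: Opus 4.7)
I would prove Theorem \ref{thm:irr} by analyzing the Bernstein-Zelevinsky derivatives of $\chi\times 1$ in a uniform way over all local fields, using the archimedean analogs developed in \cite{AGS, AGS2}. The first step is to decompose the Grassmannian $G_n/P_{p_1,p_2}$ into orbits under the mirabolic subgroup $M_n\subset G_n$, indexed by the dimension of the intersection of the $p_1$-dimensional subspace with a fixed line $F e_1\subset F^n$. This decomposition yields a short exact sequence of $M_n$-modules exhibiting $\chi\times 1$ as an extension of a line bundle over the smaller Grassmannian $\mathrm{Gr}(p_1-1,n-1)$ by a Schwartz-sections piece supported on the open cell.

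Iterating this decomposition, or equivalently computing the derivatives $(\chi\times 1)^{(k)}$ via a Leibniz-type formula together with the vanishing of $\chi^{(j)}$ for $0<j<p_i$ (when $\chi$ is a character), one finds that the non-vanishing derivatives are, up to explicit shifts involving $\nu^{\pm 1/2}$ (and, over $\R$, powers of $\eps$; over $\C$, powers of $\alp$), of the form $\chi\nu^{c(k)}\times 1$ on $G_{n-k}$ for $0\le k\le r$. The upper bound $k\le r$ reflects the fact that only the first $r+1$ mirabolic orbits contribute a nonzero graded piece.

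Reducibility is then detected as follows. By exactness of the derivative functors---the central technical input from \cite{AGS,AGS2} in the archimedean case---any proper $G_n$-invariant subspace $V\subset\chi\times 1$ induces a proper sub-filtration of the derivative data, producing a non-zero element of $\Hom_{G_{n-k}}\bigl(V^{(k)},(\chi\times 1)^{(k)}/V^{(k)}\bigr)$. Matching the explicit formulas for the derivatives, such a non-zero Hom can occur precisely when one of the arithmetic coincidences listed in (\ref{it:irrmixed})--(\ref{it:irrC}) holds: (\ref{it:irrmixed}) is the basic coincidence valid over any $F$; (\ref{it:irrR}) and (\ref{it:irrR2}) account for twists by the sign character $\eps$ over $\R$; and (\ref{it:irrC}) accounts for the two-parameter continuous family $\alp^k\nu^s$ over $\C$. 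The converse direction---that each such coincidence actually produces reducibility---is handled by writing down an explicit intertwining operator, either via the standard meromorphic family of intertwiners (at non-degenerate shifts) or via a residue construction from the Schwartz-sections picture.

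The main obstacle is the archimedean case. Over non-archimedean $F$ the derivative functor is exact and the argument is essentially combinatorial, recovering \cite{Zel}. Over $\R$ or $\C$, one must instead work with Fr\'echet representations of moderate growth and Schwartz sections along orbits, and it is precisely the exactness and non-vanishing results of \cite{AGS,AGS2} that allow one to lift the formal derivative computations to genuine reducibility criteria at the level of the Fr\'echet representations. A secondary subtlety over $\C$ is keeping track of the shifts produced by $\alp$, since the map $k\mapsto\alp^k\nu^s$ gives a two-dimensional parameter space rather than the one-dimensional $\nu^s$-line, so the coincidences are cut out by a pair of inequalities $s\pm k>n/2-r$ rather than a single one.
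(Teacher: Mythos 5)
Your general toolkit (mirabolic filtration, Bernstein--Zelevinsky derivatives and their archimedean analogs from \cite{AGS,AGS2}) is the same as the paper's, but the logical core of your argument has two genuine gaps, one in each direction. For necessity, your mechanism is not valid: a proper closed invariant subspace $V\subset\chi\times 1$ does \emph{not} produce a non-zero element of $\Hom_{G_{n-k}}\bigl(V^{(k)},(\chi\times 1)^{(k)}/V^{(k)}\bigr)$ --- exactness only gives $V^{(k)}\subset(\chi\times1)^{(k)}$ and $(\pi/V)^{(k)}\cong\pi^{(k)}/V^{(k)}$, and for a non-split extension with non-isomorphic constituents all such Homs vanish while $\pi$ is still reducible. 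Consequently the ``arithmetic coincidences'' (i)--(iv) are never actually derived in your scheme. The paper's route is different: by induction on $r=\min(p_1,p_2)$, exactness plus the fact that $\Phi$ kills exactly the finite-dimensional (depth $1$) constituents and is non-zero on the others shows that reducibility of $\chi\times1$ forces either reducibility of $\chi'\times1$ (induction) or the existence of a finite-dimensional submodule or quotient; the latter is then classified by Frobenius reciprocity and highest-weight theory (Corollary \ref{cor:AFinSub}, plus the segment/permutation argument of Corollary \ref{cor:pre1dimpadic} in the $p$-adic case), and it is precisely this classification that yields the explicit lists in (i)--(iv). Your proposal contains no substitute for this finite-dimensional analysis. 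A smaller but related problem: your derivative bookkeeping is off --- since characters are segment representations, the $k$-th BZ derivative of $\chi\times1$ vanishes for $k>2$; what one actually iterates is the functor $\Phi$ sending representations of $G_n$ to $G_{n-2}$, with $\Phi^{j}(\chi\times1)=\chi^{(j)}\times1$ for $j\le r$, so the ``explicit formulas'' your matching argument rests on are not correct as stated.

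For sufficiency, merely ``writing down an explicit intertwining operator, or a residue construction'' does not prove reducibility: you must show the operator is neither zero nor an isomorphism, and that is exactly the hard point. The paper handles it by contradiction: if one of the conditions held but $\chi\times1$ were irreducible, the two standard intertwiners $\chi\times1\rightleftarrows 1\times\chi$ would exhibit $\chi\times1$ as a direct summand of $1\times\chi$; applying $\Phi$ repeatedly until a finite-dimensional constituent appears, Corollary \ref{cor:AFinSub} together with duality shows that the finite-dimensional piece can occur only as a submodule on one side and only as a quotient on the other, contradicting the direct-summand property. Without this argument (or an explicit $K$-type computation in the style of Howe--Lee) the ``if'' direction of the theorem remains unproven in your proposal.
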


We also show (Lemma \ref{lem:padic}) that  if $F$ is non-archimedean then $length(\chi \times 1) \leq 2$.

\Dima{We} then show that $\chi\times 1$ has a unique composition
series, monotone in some sense. For Archimedean $F$ we could use  the Gelfand - Kirillov dimension. For the general case we will use the notion of rank, see \S\ref{subsec:rank}. Define $s(\chi)\in \R$ by $|\chi|=\nu^{s(\chi)}$.

\begin{thm}
\label{thm:mon} If $s(\chi)>0$
 then the irreducible constituents of $\chi \times 1$ appear in  strictly descending order
of rank, {\it i.e.} \Dima{for any subrepresentaion $\sigma$ of $\chi \times 1$,
the quotient $(\chi \times 1)/\sigma$ has a unique irreducible subrepresentaion, and its rank is higher than the the ranks of all other irreducible Jordan-Holder constituents of $(\chi \times 1)/\sigma$.

If $s(\chi)\leq0$ then the irreducible constituents of $\chi \times 1$ appear  in   strictly ascending order of rank.}
\end{thm}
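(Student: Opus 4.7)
The plan is first to reduce to the case $s(\chi) > 0$ by contragredient duality: the contragredient of $\chi \times 1$ is isomorphic, up to a twist by a character of $G_n$, to $\chi^{-1} \times 1$, and passing to the contragredient reverses the composition series while preserving rank. Hence the $s(\chi) < 0$ statement follows formally from the $s(\chi) > 0$ one. The case $s(\chi) = 0$, which in the reducible setting can only occur within item (iv) of Theorem \ref{thm:irr}, is handled either by direct analysis of the associated unitary principal series intertwiner or by a deformation argument from nearby values $s(\chi) > 0$.

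For $s(\chi) > 0$, the core tool is the standard intertwining operator
\[
M \colon \chi \times 1 \;\longrightarrow\; 1 \times \chi,
\]
which is holomorphic in this half-plane and an isomorphism exactly when $\chi \times 1$ is irreducible. In the reducible cases, $\operatorname{Ker}(M)$ and $\operatorname{Im}(M)$ are complementary nonzero proper subrepresentations, with $\operatorname{Ker}(M)$ containing the socle of $\chi \times 1$. The main technical step is to compute the Bernstein--Zelevinsky derivatives of $\chi \times 1$ via the geometric lemma of \cite{BZ-Induced} and its archimedean analog in \cite{AGS,AGS2}: each successive quotient in the BZ filtration is a normalized induction on a smaller $\GL$, involving $\chi$ and powers of $\nu$ in a combinatorially transparent way. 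The top nonvanishing derivative determines the rank (in the sense of \S\ref{subsec:rank}) of each Jordan--H\"older constituent, and the desired outcome is that the distinct constituents of $\chi \times 1$ have pairwise distinct ranks.

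With the ranks in hand, the monotonicity follows by induction on length. One shows that $\operatorname{Ker}(M)$ carries the top nonvanishing derivative of $\chi \times 1$, so that the unique irreducible subrepresentation of $\chi \times 1$ lies inside $\operatorname{Ker}(M)$ and has strictly the highest rank; this is because the analogous derivative computation applied to the target $1 \times \chi$ shows that its top derivative is strictly smaller, forcing $\operatorname{Im}(M) \subset 1 \times \chi$ into the lower-rank range. Applying the same reasoning to $\operatorname{Ker}(M)$, which is itself either irreducible or a degenerate principal series on smaller Grassmannians within the same framework, yields the full composition series in strictly descending rank order.

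The main obstacle is the bookkeeping of BZ derivatives in the reducible cases of Theorem \ref{thm:irr}, particularly in the archimedean setting where the length of $\chi \times 1$ can exceed two (items (ii) and (iii)) and where the archimedean geometric lemma from \cite{AGS,AGS2} replaces its $p$-adic counterpart. One must verify that distinct Jordan--H\"older constituents are in fact separated by their top derivatives, and that the unique maximal-rank constituent at each stage of the filtration is canonically identified with the kernel of the intertwining operator at that stage---both facts emerging from explicit orbit-by-orbit computations on the graded pieces of the BZ filtration.
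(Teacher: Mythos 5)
Your reduction to $s(\chi)>0$ by duality matches the paper, and derivatives plus the intertwining operator are indeed the right toolbox, but the core of your argument fails at two points. First, the claim that the derivative computation "applied to the target $1\times\chi$ shows that its top derivative is strictly smaller" is false: $1\times\chi$ and $\chi\times1$ have the same constituents in the Grothendieck group (and $\Phi(1\times\chi)=1\times\chi'$, $\Phi(\chi\times1)=\chi'\times1$), so both have rank $r$ and the same top nonvanishing derivative. What actually forces $\operatorname{Im}(M)$ to be small is that the composition series of $1\times\chi\cong\chi\otimes(\chi^{-1}\times1)$ is ordered \emph{oppositely} to that of $\chi\times1$ — but that is exactly the content of Theorem \ref{thm:mon} (it is how the paper deduces Corollary \ref{cor:IntOp} from it), so invoking it here is circular. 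Second, your induction step rests on $\operatorname{Ker}(M)$ being "either irreducible or a degenerate principal series on smaller Grassmannians within the same framework"; it is neither — it is merely a proper subrepresentation of $\chi\times1$, not of the form $\chi_1\times\chi_2$ on any group, so there is no recursion available along these lines. The functor that does give a recursion is $\Phi$ itself, which sends $\chi\times1$ on $G_n$ to $\chi'\times1$ on $G_{n-2}$, and the paper's proof is an induction on $n$ through it.

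The missing idea, and the actual crux, is the treatment of the constituents that $\Phi$ annihilates, namely the finite-dimensional ones. In the paper, exactness of $\Phi$ and the induction hypothesis order the infinite-dimensional constituents $\tau_i$ (since $\Phi(\tau_i)\neq0$ and $\operatorname{rank}(\tau_i)=\operatorname{rank}(\Phi(\tau_i))+1$), but one must prove separately that for $s(\chi)>0$ a finite-dimensional constituent can only occur as a quotient (rank $0$, hence at the bottom). This is Lemma \ref{lem:AppFinDim}, whose proof requires Corollary \ref{cor:pre1dimpadic} in the non-archimedean case and the infinitesimal-character and $K$-type multiplicity-one analysis (Lemma \ref{lem:signs}, Corollaries \ref{cor:AFinSub}, \ref{cor:UniqFinDimSQ}) in the archimedean case; your proposal contains no substitute for it, and your assertion that distinct Jordan--H\"older constituents have pairwise distinct ranks is stated as a desired outcome rather than proved — the geometric-lemma computation of derivatives of the full induced module does not by itself separate its subquotients. (A minor point: for $s(\chi)=0$ none of the conditions of Theorem \ref{thm:irr}, including (iv), can hold, so $\chi\times1$ is irreducible and that case is vacuous; a "deformation from nearby values" would in any event not control a composition series.)
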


Consider the standard intertwining operator $I_{\chi}:\chi\times 1 \to 1\times \chi$. From Theorem \ref{thm:mon} we obtain
the following corollary. \begin{cor}\label{cor:IntOp}
The image of $I_{\chi}$ is the only irreducible quotient of  $\chi\times 1$ and the only irreducible submodule of $1\times \chi$. In particular,  $I_{\chi}$ is invertible if and only if $\chi \times 1$ is irreducible.
\end{cor}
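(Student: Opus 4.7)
The plan is to upgrade Theorem \ref{thm:mon} to the statement that both $\chi\times 1$ and $1\times \chi$ are \emph{uniserial} (their submodule lattice is a chain), and then to trap the image of $I_\chi$ between their cosocle and socle.

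First I would observe that Theorem \ref{thm:mon} directly implies $\chi \times 1$ is uniserial. The ``i.e.''\ clause says every quotient $(\chi\times 1)/\sigma$ has simple socle, and a short induction on length shows this alone forces the submodule lattice to be a chain: any nonzero submodule contains the (simple) socle of $\chi\times 1$, and then, passing to the quotient by this socle and iterating, it must coincide with some term of the socle filtration. In particular $\chi \times 1$ has a unique irreducible quotient $\pi_{\mathrm{top}}$, which by strict monotonicity of ranks is the Jordan--H\"older constituent of minimal rank when $s(\chi)>0$ and of maximal rank when $s(\chi)<0$. Applying the same reasoning to $1\times \chi$, which is isomorphic up to a central twist to $\chi^{-1}\times 1$ with $s(\chi^{-1})=-s(\chi)$ of the opposite sign, yields a unique irreducible submodule $\pi_{\mathrm{bot}}$ of $1\times \chi$ of the same extremal rank. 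Since $\chi\times 1$ and $1\times \chi$ share the same Jordan--H\"older content (a standard fact for $\mathrm{GL}(n)$, which also follows from the fact that $I_\chi$ is nonzero and generically invertible), and since strict monotonicity makes the extremal composition factor unique, $\pi_{\mathrm{top}}=\pi_{\mathrm{bot}}=:\pi$.

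The image $Q := \mathrm{im}(I_\chi)$ is a nonzero quotient of $\chi\times 1$ and a nonzero submodule of $1 \times \chi$, so by uniseriality its cosocle equals $\pi_{\mathrm{top}}=\pi$ and its socle equals $\pi_{\mathrm{bot}}=\pi$. Being a subquotient of a uniserial module, $Q$ is itself uniserial, and having equal simple socle and cosocle forces $Q=\pi$. This proves the first sentence of the corollary. For the second: $\chi\times 1$ is irreducible iff $1\times \chi$ is (same JH content), and in that case $I_\chi$ is a nonzero map between simple modules, hence an isomorphism; conversely, if $I_\chi$ is invertible, then $1\times\chi=Q=\pi$ is simple, so $\chi\times 1$ is simple too. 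The case $s(\chi)=0$ is vacuous, since inspection of Theorem \ref{thm:irr} shows no reducibility occurs there.

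The main obstacle I foresee is the first step: extracting uniseriality cleanly from the ``strictly descending order of rank'' phrasing of Theorem \ref{thm:mon}, and matching $\pi_{\mathrm{top}}$ with $\pi_{\mathrm{bot}}$ across the twist $1\times\chi\cong(\chi^{-1}\times 1)\otimes \chi(\det)$. The remaining socle/cosocle manipulation and the irreducibility equivalence are then formal.
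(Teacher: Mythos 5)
Your proposal is essentially the paper's own argument in heavier packaging: the engine in both cases is Theorem \ref{thm:mon} applied to the two ends of $I_\chi$, via $1\times\chi=\chi\otimes(\chi^{-1}\times 1)$ and $s(\chi^{-1})=-s(\chi)$, so that the constituents of $\chi\times 1$ and of $1\times\chi$ are ordered by rank in opposite directions; the paper then concludes directly that the constituents of $Q=\mathrm{im}(I_\chi)$, which is a quotient of the first and a submodule of the second, would have to appear simultaneously in descending and in ascending order of rank, forcing $Q$ to be irreducible, after which the socle/cosocle identifications are automatic. Two points in your version need attention. First, your closing inference ``uniserial with equal simple socle and cosocle forces $Q=\pi$'' is not a valid general principle: a nonsplit self-extension of a simple module is uniserial with socle isomorphic to cosocle. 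It is correct here only because the strict rank inequalities of Theorem \ref{thm:mon} make the Jordan--H\"older constituents of $\chi\times 1$ pairwise non-isomorphic, so a subquotient of length at least two cannot have isomorphic socle and cosocle; you should say this explicitly, or better, compare the two orderings on the constituents of $Q$ itself, which renders the step (and the uniseriality bookkeeping) unnecessary. Second, your identification $\pi_{\mathrm{top}}=\pi_{\mathrm{bot}}$ goes through the equality of Jordan--H\"older content of $\chi\times 1$ and $1\times\chi$; this is indeed a standard fact (equality of characters for induction from associate parabolics, or Bernstein--Zelevinsky non-archimedeanly), but it is an external input the paper avoids, and your parenthetical justification from ``$I_\chi$ is nonzero and generically invertible'' does not establish it at the special points in question, so you should cite it rather than derive it this way. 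Your treatment of the irreducible case via Schur, and your observation that Theorem \ref{thm:irr} excludes reducibility when $s(\chi)=0$ (which the paper leaves implicit), match the paper and are fine.
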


\subsection{Connections to integral geometry}\label{subsec:IntGeo}

The $\alp$-cosine transform on real Grassmannian in a Euclidean space $\RR^n$, including the case of even functions on the sphere,
was studied by both analysts and geometers for a long period of time: \cite{semjanistyi,schneider,koldobsky,ournycheva-rubin, rubin-inversion,rubin-adv,rubin-funk-cosine}.
The case $\alp=1$ plays a special role in convex and stochastic geometry, see \cite{goodey-howard1,goodey-howard2,goodey-howard-reeder}, and in particular in valuation theory
\cite{AB,alesker-jdg-03,alesker-gafa-lefschetz,alesker-fourier}. One of the reasons why the case of at least some values $\alpha\ne 1$ is relevant for geometry is the recent
result \cite{AlGS} saying that the Radon transform between the Grassmannians of $i$- and $(n-i)$-dimensional linear subspaces in $\RR^n$ coincides with the
$\alpha=-\min\{i,n-i\}$-cosine transform\footnote{For $i=1$ this result is known for a long time and seems to be a folklore.}, and moreover sometimes composition of two Radon transforms coincides with the $\alpha$-cosine transform for appropriate
$\alpha\ne 1$.

Let us recall the definition of the $\alpha$-cosine transform. Let $Gr_i(\RR^n)$ denote the Grassmannian
of $i$-dimensional linear subspaces in the Euclidean space $\RR^n$. For a pair of subspaces $E,F\in Gr_i(\RR^n)$ one defines (the absolute value of)
the cosine of the angle $|\cos(E,F)|$ between $E$ and $F$ as the coefficient of the distortion of measure under
the orthogonal projection from $E$ to $F$. More precisely, let $q\colon E\to F$ denote the restriction to $E$ of
the orthogonal projection $\RR^n\to F$. Let $A\subset E$ be an arbitrary subset of finite positive Lebesgue measure.
Then define
$$|\cos(E,F)|:=\frac{vol_F(q(A))}{vol_E(A)},$$
where $vol_F, vol_E$ are Lebesgue measures on $F,E$ respectively induced by the Euclidean metric on $\RR^n$ normalized
so that the Lebesgue measures of unit cubes are equal to 1. It is easy to see that $|\cos(E,F)|$
is independent of the set $A$ and is symmetric with respect to $E$ and $F$.

\begin{example}
If $i=1$ then $|\cos(E,F)|$ is the usual cosine of the angle between two lines.
\end{example}

For $\alp\in \CC$ with $Re(\alp)\geq 0$ let us define the $\alpha$-cosine transform
$$T_\alp:=T^i_\alp\colon C^\infty(Gr_i(\RR^n))\to C^\infty(Gr_i(\RR^n))$$
by
\begin{eqnarray}\label{E:Int1}
(T^{i}_\alp f)(E)=\int_{F\in Gr_i(\RR^n)}|\cos(E,F)|^\alp f(F) dF,
\end{eqnarray}
where $dF$ is the $O(n)$-invariant Haar probability measure on $Gr_i(\RR^n)$.
\begin{remark}\label{Int-R:2}
One can show that the integral (\ref{E:Int1}) absolutely converges for $Re(\alp)>-1$ (see  \cite[Lemma 2.1]{alesker-alpha}).
\end{remark}
It is well known that $T_\alp$ has a meromorphic continuation in $\alp\in \CC$.
For $\alp_0\in \CC$ we will denote by $S_{\alp_0}$ the
first non-zero coefficient in the decomposition of the meromorphic function $T_\alp$ near $\alp_0$, namely
$$T_\alp=(\alp-\alp_0)^k\cdot( S_{\alp_0} +O(\alp-\alp_0))\mbox{ as } \alp\to \alp_0, \, S_{\alp_0}\ne 0.$$
Thus $$S_{\alp_0}\colon C^\infty(Gr_i(V))\to C^\infty(Gr_i(V))$$
is defined and non-zero for any $\alp_0\in \CC$, and coincides with $T_{\alp_0}$ for all but countably many $\alp_0$
(necessarily $Re(\alp_0)\leq -1$ for exceptional values of $\alp_0$). $S_\alp$ will also be called the $\alp$-cosine transform and sometimes denoted $S_\alp^i$.
The geometric meaning of $S_{\alp_0}$ for the exceptional values of $\alp_0$ is investigated in \cite{AlGS}.

Obviously the $\alpha$-cosine transform commutes with the natural action of the orthogonal group $O(n)$.
It was observed in \cite{AB} for $\alp=1$ and in \cite{alesker-alpha} for general $\alp$ that the $\alp$-cosine transform
can be rewritten in such a way to commute with an action of the much larger full linear group $GL(n,\RR)$. In that language it is a map
\begin{eqnarray}\label{E:invar-transform}
C^\infty(Gr_{i}(\RR^n),L_\alp)\to C^\infty(Gr_{n-i}(\RR^n),M_\alp),
\end{eqnarray}
where $L_\alp,M_\alp$ are appropriately chosen $GL(n,\RR)$-equivariant line bundles over Grassmannians, and a choice of a Euclidean metric on $\RR^n$ induces
identification $Gr_{n-i}(\RR^n)\simeq Gr_i(\RR^n)$ by taking the orthogonal complement, and $O(n)$-equivariant identifications of $L_\alp$ and $M_\alp$
with trivial line bundles. In fact the $\alpha$-cosine transform turned out to be a special case of the standard representation theoretical construction called
intertwining integral. This opened the way to apply the infinite dimensional representation theory of the group $GL(n,\RR)$. In that language we have $$C^\infty(Gr_{i}(\RR^n),L_\alp)\simeq \nu_i^{(n-i)/2} \times \nu_{n-i}^{-\alpha-i/2} , \quad C^\infty(Gr_{n-i}(\RR^n),M_\alp)\simeq \nu_{n-i}^{-\alpha-i/2}  \times\nu_i^{(n-i)/2} ,$$
and $S_{\alp}$ is the standard intertwining operator
$$S^{i}_{\alp}:\nu_i^{(n-i)/2} \times \nu_{n-i}^{-\alpha-i/2}  \to \nu_{n-i}^{-\alpha-i/2}  \times\nu_{i}^{(n-i)/2} .$$

% In fact the main results of this paper
% are formulated on this representation theoretical language. To formulate them, let us recall that it is well known [??] that the representation
% of $GL(n,\RR)$ in the space of smooth sections over a $GL(n,\RR)$-equivariant line bundle  over $Gr_i(\RR^n)$ has finite length,
% namely there exists a finite increasing sequence of closed $GL(n,\RR)$-invariant subspaces such that all consecutive quotients are irreducible,
% i.e. do not have $GL(n,\RR)$-invariant closed proper subspaces; such sequence of subspaces is called {\itshape composition series}. (This is in sharp contrast with the action of $O(n)$ on the space of sections: it has infinite length.)
% Moreover for generic $\alpha\in \CC$ the representation of $GL(n,\RR)$ is irreducible.

Thus Theorem \ref{thm:irr} and Corollary \ref{cor:IntOp}   immediately imply the following corollary.
\begin{corollary}\label{Cor-main-corollary}
\begin{enumerate}
\item $S^{i}_\alp$ is not invertible if and only if there exists a non-negative integer $k$ such that one of the following holds:\\
%\begin{multline*}
$(a)\, \alp=2k, \quad (b) \, \alp=-n-2k,$\\
$(c)\, r>1 \text { and }\alp =1-r+k, \quad (d)\, r>1 \text { and } \alp=  r-n-1-k.$
%\end{multline*}

\item \Dima{The range of $S^{i}_\alp$ is the intersection of all closed $G$-invariant non-zero subspaces of $\nu_{n-i}^{-\alpha-i/2}  \times\nu_{i}^{(n-i)/2}$.}

\item If $S^{i}_\alp$ is invertible then its inverse is a scalar multiple of $S_{-n-\alp}^{n-i}$.
\end{enumerate}
\end{corollary}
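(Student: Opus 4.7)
My plan is to reduce all three parts to Theorems \ref{thm:irr} and \ref{thm:mon} together with Corollary \ref{cor:IntOp}, via a single uniform twist. Tensoring source and target by the character $\nu_n^{\alpha + i/2}$ of $G = GL(n,\RR)$ identifies $S^i_\alpha$ (up to a non-zero scalar) with the standard intertwining operator $I_\chi\colon \chi \times 1 \to 1 \times \chi$, where $\chi = \nu^{n/2 + \alpha}$ is a pure $|\det|$-power character (no $\eps$-component) and $r = \min(i, n-i)$.

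For part (1), Corollary \ref{cor:IntOp} reduces the claim to classifying when $\chi \times 1$ is reducible. I would then apply Theorem \ref{thm:irr} with $F = \RR$, retaining only sub-cases whose $\eps$-exponent is even (since $\chi$ has no sign factor): case (i) contributes $\alpha = k - n$ and $\alpha = -k$ for $0 \leq k \leq r-1$; case (ii) contributes, only for $k$ odd positive, $\alpha = k - r$ and $\alpha = r - k - n$; the first alternative of case (iii) contributes, for $k \geq 1$ and $r > 1$, $\alpha = k - r + 1$ and $\alpha = r - k - 1 - n$; the second alternative of (iii) involves $\eps$ and contributes nothing. Merging these unions, for $r > 1$ the reducibility locus collapses to $\{\alpha \in \Z : \alpha \geq 1 - r\} \cup \{\alpha \in \Z : \alpha \leq r - 1 - n\}$, i.e.\ conditions (c) and (d), which also absorb (a) and (b); for $r = 1$ only cases (i) and (ii) survive and one reads off exactly (a) and (b). Carrying out this collapse neatly is the main bookkeeping obstacle.

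For part (2), $1 \times \chi$ has finite length by Theorem \ref{thm:mon}, so every non-zero closed $G$-invariant subspace contains some irreducible submodule. Corollary \ref{cor:IntOp} identifies the image of $I_\chi$ as the unique irreducible submodule of $1 \times \chi$; hence this image is contained in every non-zero closed $G$-invariant subspace, and equals their intersection. Untwisting transfers the statement to the range of $S^i_\alpha$.

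For part (3), the same twist procedure applied to $S^{n-i}_{-n-\alpha}$, combined with the standard $G$-isomorphism between $1 \times \chi$ induced from $P_{i,n-i}$ and $\chi \times 1$ induced from $P_{n-i,i}$ (conjugation by the Weyl element), realizes $S^{n-i}_{-n-\alpha}$ as a scalar multiple of the standard intertwining operator $1 \times \chi \to \chi \times 1$. Consequently $S^{n-i}_{-n-\alpha} \circ S^i_\alpha$ is a $G$-endomorphism of $\chi \times 1$ (after twist). When $S^i_\alpha$ is invertible, $\chi \times 1$ is irreducible by part (1), so Schur's lemma forces this endomorphism to be a scalar, and invertibility of $S^i_\alpha$ forces that scalar to be non-zero, giving the desired relation.
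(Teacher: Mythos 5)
Your overall strategy is exactly the paper's: the paper offers no separate argument for this corollary beyond the remark that it follows immediately from Theorem \ref{thm:irr} and Corollary \ref{cor:IntOp} once $S^i_\alp$ is identified (via the stated isomorphisms of line-bundle sections with $\nu_i^{(n-i)/2}\times\nu_{n-i}^{-\alp-i/2}$, etc.) with the standard intertwining operator, and your twist by $\nu_n^{\alp+i/2}$, giving $I_\chi\colon\chi\times 1\to 1\times\chi$ with $\chi=\nu^{n/2+\alp}$ and $r=\min(i,n-i)$, is precisely that reduction. Your case-merging in part (1) is correct (case (i) gives $\alp=-k,\,k-n$ for $0\le k\le r-1$; case (ii) only for odd $k$; case (iii) only its $\eps$-free alternative; the union is $\{\alp\in\Z:\alp\ge 1-r\}\cup\{\alp\in\Z:\alp\le r-1-n\}$ for $r>1$ and $(a)\cup(b)$ for $r=1$), and part (2) is the right use of finite length plus the uniqueness of the irreducible submodule of $1\times\chi$.

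The one step I would object to is in part (3): there is no ``standard $G$-isomorphism by conjugation by the Weyl element'' between $1\times\chi$ induced from $P_{i,n-i}$ and $\chi\times 1$ induced from $P_{n-i,i}$. Conjugation by the block-swap Weyl element carries $P_{i,n-i}$ to the \emph{opposite} parabolic $\bar P_{n-i,i}$, so it identifies $\chi_1\times\chi_2$ with induction from $\bar P$, not with $\chi_2\times\chi_1$; the two are related only by the (generally non-invertible) intertwining operator of Proposition \ref{Prop:KnSt}, and asserting such an isomorphism in general would contradict the reducibility results of the paper. Fortunately no such identification is needed: twisting $S^{n-i}_{-n-\alp}$, whose source and target are $\nu_{n-i}^{i/2}\times\nu_i^{\alp+(n+i)/2}$ and $\nu_i^{\alp+(n+i)/2}\times\nu_{n-i}^{i/2}$, by the character $\nu_n^{-i/2}$ turns it literally into the standard intertwining operator $1\times\chi\to\chi\times 1$, whose source is the target of your twisted $S^i_\alp$ and whose target is its source. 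Then the composite is a $G$-endomorphism of $\chi\times 1$, Schur's lemma gives a scalar $c$, and $c\ne 0$ because $S^i_\alp$ is surjective and $S^{n-i}_{-n-\alp}\ne 0$ by its definition as the leading Laurent coefficient; this is the argument you intended, and since part (3) only concerns the irreducible case the slip does not propagate, but the Weyl-element justification should be replaced by this character twist (or by the Euclidean-structure identification implicit in the geometric definition of the cosine transform).
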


\subsection{Related works}\label{subsec:known}

In the archimedean cases Theorem \ref{thm:irr} was   proven earlier in \cite{HL}, the non-archimedean case follows from \cite{Zel}.
In fact \cite{HL} computes the length of $\chi_1\times\chi_2$ in the archimedean cases.
For $F=\R$ and $\chi=\nu^{s}$, where $|s|+n/2$ is not an
integer, or is bigger than $n-1$  Theorem \ref{thm:mon}  was proven in \cite{alesker-alpha}. For $F\in \{\R,\C\}$ and $p_1\geq p_2$ a similar  theorem was proven in \cite{HL}, where the irreducible constituents are characterized by their $K$-types, as opposed to the rank (or the Gelfand-Kirillov dimension). The assumption $p_1\geq p_2$ can be removed using the Cartan involution $g\mapsto \,^tg^{-1}$.
For $F=\R, \, n=2r$ this theorem was proven in \cite{BSS}.

\subsection{Acknowledgements} The main ideas of this paper appeared in connection to the work \cite{GS-Int}. We thank Siddhartha Sahi for allowing us to use them in this work. We thank Semyon Alesker for explaining us the connections to integral geometry, the anonymous referee for useful remarks, and Michal Zydor for translating the abstract into French.

\section{Preliminaries}
\label{sec:prel}

In this section we recall some of the necessary background for the benefit of the reader.

\subsection{Degenerate principal series}

Let $G$ be a reductive algebraic group over an arbitrary local field $F$. In this
section we discuss some basic properties of the induced representation
$I\left(  P,\chi\right)  $ on $C^{\infty}\left(  G,P,\chi\right)  $ as in
(\ref{=CG}). For detailed proofs we refer the reader to \cite{BW,Wal1}
or to other standard texts on representation theory.

Let $\mathcal{E}^{\prime}\left(  G\right)  $ denote the set of compactly
supported distributions on $G$, regarded as a left and right $G$-module as
usual via the pairing $\left\langle \cdot,\cdot\right\rangle :\mathcal{E}%
^{\prime}\left(  G\right)  \times C^{\infty}\left(  G\right)  \rightarrow
\mathbb{C}$.

\begin{lemma}
\label{lem: delta1} Let $\varepsilon\in\mathcal{E}^{\prime}\left(  G\right)  $
denote evaluation at $1\in G$, then we have
\[
\left\langle R_{p^{-1}}\varepsilon,f\right\rangle =\chi\left(  p\right)
\Delta_{P}^{1/2}\left(  p\right)  \left\langle \varepsilon,f\right\rangle
\text{ for all }p\in P,f\in C^{\infty}\left(  G,P,\chi\right)
\]

\end{lemma}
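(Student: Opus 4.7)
The plan is to unwind both sides of the claimed identity, since this lemma amounts to reformulating the defining equivariance of $C^\infty(G,P,\chi)$ as an equivariance statement for the distribution $\varepsilon$ under the dual right action.

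First, I would compute $\langle R_{p^{-1}}\varepsilon, f\rangle$ by transferring the group action onto $f$. The standard convention making $\mathcal{E}'(G)$ a right $G$-module compatible with the pairing is $\langle R_g T, f\rangle = \langle T, R_{g^{-1}} f\rangle$, so taking $g = p^{-1}$ gives
\[
\langle R_{p^{-1}}\varepsilon, f\rangle \;=\; \langle \varepsilon, R_p f\rangle \;=\; (R_p f)(1) \;=\; f(p).
\]

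Second, I would apply the defining condition $L_{p^{-1}} f = \chi(p)\Delta_P^{1/2}(p) f$ from \eqref{=CG}. Since $(L_{p^{-1}} f)(x) = f(px)$, evaluating both sides at $x=1$ yields $f(p) = \chi(p)\Delta_P^{1/2}(p) f(1) = \chi(p)\Delta_P^{1/2}(p)\langle\varepsilon,f\rangle$. Combining this with the display above gives the claim.

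No substantive obstacle is expected: the entire proof is a routine unwinding of definitions. The only point that deserves care is bookkeeping with the convention for the $G$-action on distributions, so that the right translation $R_{p^{-1}}$ on $\mathcal{E}'(G)$ is indeed adjoint to the right translation $R_p$ on $C^\infty(G)$; once this is fixed, the two-line computation above closes the argument.
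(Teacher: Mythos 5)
Your proof is correct and is exactly the paper's argument: the paper's one-line proof ("both sides are equal to $f(p)$") is precisely the unwinding you carry out, with the dual convention $\left\langle R_{p^{-1}}\varepsilon,f\right\rangle=\left\langle \varepsilon,R_{p}f\right\rangle$ and the defining relation $L_{p^{-1}}f=\chi(p)\Delta_{P}^{1/2}(p)f$ evaluated at $1$. Nothing further is needed.
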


\begin{proof}
Indeed both sides are equal to $f\left(  p\right)  $.
\end{proof}

\begin{lemma}[{\cite[V.5.2.4]{Wal1}}]
\label{lem:duality} The representations $I\left(P,\chi\right)$ and
$I\left(P,\chi^{-1}\right)$ are contragredient.
\end{lemma}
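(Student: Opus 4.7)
The plan is to exhibit a canonical nondegenerate $G$-invariant bilinear pairing between $I(P,\chi)$ and $I(P,\chi^{-1})$ and then invoke standard smooth-duality for sections of line bundles over compact manifolds.

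First I would construct the pairing by pointwise multiplication followed by integration over the compact homogeneous space $G/P$. Given $f\in C^\infty(G,P,\chi)$ and $g\in C^\infty(G,P,\chi^{-1})$, the product $fg\in C^\infty(G)$ satisfies, for any $p\in P$,
\[
L_{p^{-1}}(fg)=\chi(p)\chi^{-1}(p)\Delta_P^{1/2}(p)\Delta_P^{1/2}(p)fg=\Delta_P(p)fg,
\]
which is precisely the transformation rule identifying $fg$ with a smooth section of the density bundle on $G/P$. Since $P$ is a maximal parabolic, $G/P$ is a compact Grassmannian, and the integral
\[
\langle f,g\rangle:=\int_{G/P}fg
\]
is well defined and finite. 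Right translation by any $h\in G$ commutes with pointwise multiplication and preserves the invariant density integral, which yields the $G$-equivariance $\langle R_hf,R_hg\rangle=\langle f,g\rangle$.

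Next I would check that the associated linear map $T\colon I(P,\chi^{-1})\to I(P,\chi)^{\vee}$, $g\mapsto\langle\cdot,g\rangle$, is injective. This is a routine bump-function argument: for nonzero $g$ one picks a point of $G/P$ at which $g$ is nonzero and takes $f$ concentrated in a small trivialising neighbourhood of that point so that $\langle f,g\rangle\neq 0$; by symmetry of the construction, the pairing is nondegenerate in each variable.

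The main obstacle, and the real content of the lemma, is surjectivity of $T$ onto the smooth dual. In the non-archimedean case this is classical and follows from the fact that every smooth linear functional on the space of smooth sections of a $G$-equivariant line bundle over the totally disconnected compact space $G/P$ is represented by integration against a smooth section of the dual bundle. In the archimedean case one must identify the analytic pairing constructed above with the smooth Fr\'{e}chet globalisation of the algebraic Harish-Chandra contragredient; this is the delicate point proved as V.5.2.4 of \cite{Wal1}, and relies on admissibility of $I(P,\chi)$ together with the density of smooth sections of the dual line bundle in distributional sections.
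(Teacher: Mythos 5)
Your argument is correct and is exactly the standard duality-pairing proof underlying the result: the paper itself offers no proof of Lemma \ref{lem:duality}, citing only \cite[V.5.2.4]{Wal1}, and your construction (the product transforms by $\Delta_P$, hence integrates $G$-invariantly over $G/P$, giving a nondegenerate pairing) together with the admissibility/smooth-duality argument for surjectivity is the content of that reference. The one delicate point, identifying the pairing with the contragredient in the archimedean Fr\'{e}chet setting, you defer to the same citation the paper uses, which is acceptable here.
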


Let $\bar{P}$ denote the parabolic subgroup opposite to $P$. Then the
characters of $P$ and $\bar{P}$ can be identified with those of the common
Levi subgroup $L=P\cap\bar{P}.$

\begin{proposition}[\cite{KnSt,Wald}]
\label{Prop:KnSt}There is a nonzero intertwining operator $I\left(
P,\chi\right)  \rightarrow$ $I\left(  \bar{P},\chi\right)  .$
\end{proposition}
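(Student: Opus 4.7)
The plan is to construct the intertwining operator via the classical integral
\[
M_\chi(f)(g) := \int_{\bar{N}} f(\bar{n}g)\, d\bar{n},
\]
where $\bar{N}$ is the unipotent radical of $\bar{P}$ and $d\bar{n}$ is a fixed Haar measure. The whole argument follows the Knapp--Stein/Waldspurger blueprint, which is why the statement is attributed to \cite{KnSt,Wald}; my job is to assemble the three standard ingredients.

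First, I would work in the convergence region. The subset $\bar{N}P\subset G$ is open dense (it is the big Bruhat cell for the pair $P,\bar P$), so integration over $\bar{N}$ realizes a transversal to the fiber $P$ of $G\to G/P$. For characters $\chi$ in a suitable positive cone (those for which $(\chi\Delta_P^{1/2})(\bar{n})$ decays fast enough as $\bar n$ goes to infinity in $\bar N$) the integral converges absolutely and uniformly on compacta in $g$, and defines a smooth function of $g$. A unipotent change of variable $\bar n\mapsto \bar p\,\bar n\,\bar p^{-1}$ combined with the definition of $\Delta_{\bar P}$ then shows
\[
L_{\bar p^{-1}} M_\chi(f) = \chi(\bar p)\Delta_{\bar P}^{1/2}(\bar p)\, M_\chi(f),
\]
so that $M_\chi(f)\in C^\infty(G,\bar P,\chi)$, and the operator intertwines the right $G$-actions by inspection.

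Second, for general $\chi$ I would pass to meromorphic continuation in the character parameter. Writing $\chi=\chi_0\cdot\nu^s$ and viewing $s$ as a complex variable, the integral $M_\chi(f)(g)$ is holomorphic in $s$ on the convergence domain, and one extends it meromorphically to the whole parameter space: in the archimedean case by Bernstein's theorem on analytic continuation of integrals with polynomial dependence (equivalently, the Knapp--Stein construction \cite{KnSt}), and in the non-archimedean case by the rationality result of Waldspurger \cite{Wald}. Finally, to obtain a nonzero operator at the specific $\chi$ one wants, divide the meromorphic family by a scalar meromorphic function matching its order of vanishing or pole at $\chi$; the resulting normalized family is holomorphic and not identically zero, so its value at $\chi$ is a nonzero intertwiner.

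The main obstacle is really packaged into the meromorphic continuation together with the non-vanishing after renormalization, and in this note I would simply cite \cite{KnSt,Wald} for both. The underlying conceptual point that makes the renormalization possible is that the open Bruhat cell $\bar{N}P$ carries, up to scalar, a unique $(P\times \bar P)$-equivariant distribution transforming by the appropriate character, which forces uniqueness of the intertwiner and hence controls its order at any parameter.
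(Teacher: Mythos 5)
The paper offers no proof of this proposition; it is quoted directly from \cite{KnSt,Wald}, and your outline (convergent intertwining integral over $\bar{N}$ in the dominant range, equivariance via the change of variables and the modular characters, then meromorphic continuation in the parameter and passage to the leading Laurent coefficient to get a nonzero operator at every $\chi$) is exactly the standard argument of those references. So your proposal is correct and follows essentially the same route the paper relies on; the only caveat is that your closing remark about uniqueness of the equivariant distribution on the open cell is not needed for the statement and, as stated, only controls distributions on the open cell rather than all intertwiners.
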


\begin{lemma}
\label{lem:mac} If $K$ is a closed Lie subgroup (or an $l$-subgroup) of $G$ such that $PK=G$ then the
restriction $I(P,\pi)|_{K}$ is isomorphic to the induced representation $Ind_{P\cap K}^K(\pi|_{P\cap K})$. In
particular, the space $I(P,\pi)^{K}$ of $K$-fixed vectors is isomorphic to
$\pi^{P\cap K}$.
%$I(P,\pi)|_K\simeq I(P\cap K, \pi|_{P\cap K})$

\end{lemma}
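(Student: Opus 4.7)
The plan is to build a pair of mutually inverse $K$-equivariant maps by restriction and extension, using the decomposition $G = PK$.

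\emph{Restriction.} Define $\rho : I(P,\pi)|_K \to Ind_{P\cap K}^K(\pi|_{P\cap K})$ by $f \mapsto f|_K$. Specializing the $P$-equivariance of $f$ to elements $q \in P\cap K$ shows that $f|_K$ transforms correctly (using the standard observation that $\Delta_P$ restricted to $P\cap K$ agrees with $\Delta_{P\cap K}$, which holds because the quotient $G/P = K/(P\cap K)$ carries a $K$-invariant measure under the hypothesis $PK = G$). Tautologically $\rho$ intertwines right translation by $K$.

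\emph{Extension.} For the inverse, given $\tilde f$ on $K$, declare
$$\sigma(\tilde f)(pk) := \Delta_P^{1/2}(p)\pi(p)\tilde f(k), \quad p\in P,\ k\in K.$$
Well-definedness: if $pk = p'k'$ then $q := (p')^{-1}p = k'k^{-1}$ lies in $P\cap K$, and the $(P\cap K)$-equivariance of $\tilde f$ makes the two expressions agree. The identities $\rho\sigma = \mathrm{id}$, $\sigma\rho = \mathrm{id}$, and $K$-equivariance of $\sigma$ are then formal.

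The one genuinely technical point, and what I expect to be the main obstacle, is smoothness (respectively local constancy) of $\sigma(\tilde f)$ on all of $G$. In the archimedean case it follows because the multiplication map $P \times K \to G$ is submersive, so locally one can choose smooth sections of $G \to K/(P\cap K)$, and the transformation rule of $\tilde f$ renders $\sigma(\tilde f)$ independent of the section; in the non-archimedean case the same argument applies with locally constant sections, obtained from the fact that $P \cdot U$ is open in $G$ for any compact-open $U \subset K$. The second assertion is then immediate: a $K$-fixed section in $Ind_{P\cap K}^K(\pi|_{P\cap K})$ is constant on $K$, and its value at $1$ is precisely a $(P\cap K)$-invariant vector in $\pi$, yielding $I(P,\pi)^K \cong \pi^{P\cap K}$ via evaluation at $1$.
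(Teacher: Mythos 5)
Your proof is correct and follows the same route the paper takes: the paper's entire proof is the remark that the isomorphism is given by restricting functions to $K$, and your map $\rho$ is exactly that, with the inverse $\sigma$, the smoothness/local-constancy of the extension, and the $K$-fixed-vector identification supplied as details (note that in the intended application $K$ is compact, so $\Delta_P|_{P\cap K}$ is trivial outright, which is all your equivariance and well-definedness checks actually need).
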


The isomorphism is given just by restricting the functions to $K$.
We will use this lemma for the maximal compact subgroup $K\subset G$ which in the non-Archimedean case is (up to conjugation) the subgroup $GL(n,\cO)$, where $\cO$ is the ring of integers of $F$. For $F=\R, \, K$ is the orthogonal group $O(n)$ and for $F=\C, \, K=U(n)$.

In both Archimedean cases,  $P \cap K$ is a symmetric subgroup of $K$ and we will use the following well-known lemma.

\begin{lemma}%[see {\it e.g.} {\cite[Theorem 12.2.4]{Wol}}]
\label{lem:SymGel} If $G$ is a connected and compact Lie group, $M$ is a symmetric
subgroup of $G$ and $\chi$ is a character of $M$ then $Ind_M^G(\chi)$ is a
multiplicity-free representation.
\end{lemma}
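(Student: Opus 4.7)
The plan is to apply Gelfand's trick in its twisted form. Consider the convolution algebra
\[
\mathcal{H}(G,M,\chi) := \{ f \in C^\infty(G) \mid f(m_1 g m_2) = \chi(m_1)\chi(m_2)^{-1} f(g) \text{ for all } m_1,m_2\in M\},
\]
with the convolution coming from Haar measure on $G$. A standard argument (Frobenius reciprocity plus the identification of $\operatorname{End}_G(Ind_M^G\chi)$ with $\mathcal{H}(G,M,\chi)$) shows that $Ind_M^G(\chi)$ is multiplicity-free if and only if this algebra is commutative. The whole task therefore reduces to establishing commutativity of $\mathcal{H}(G,M,\chi)$.

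Since $M$ is a symmetric subgroup, there is an involutive automorphism $\theta$ of $G$ whose fixed point group is (an open subgroup containing) $M$. Define the anti-involution $\sigma\colon G\to G$ by $\sigma(g):=\theta(g^{-1})$. Then $\sigma|_M$ is ordinary inversion, so $\chi\circ\sigma|_M = \chi^{-1}$. Mostow's structure theorem for compact symmetric pairs provides the Cartan-type decomposition $G = M A M$, where $A\subset G$ is an abelian subgroup on which $\theta$ acts as inversion, hence on which $\sigma$ acts trivially.

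Now for $f\in\mathcal{H}(G,M,\chi)$ and any $g = m_1 a m_2 \in MAM$, one has $\sigma(g) = m_2^{-1} a m_1^{-1}$, so the $(M,\chi)$-biequivariance of $f$ combined with $\chi\circ\sigma|_M = \chi^{-1}$ and $\sigma(a)=a$ yields $f(\sigma(g)) = f(g)$. Because $\sigma$ is an anti-automorphism it reverses convolution: $(f_1 * f_2)\circ \sigma = (f_2\circ\sigma)*(f_1\circ\sigma)$. Applying $f\circ\sigma = f$ to both sides gives $f_1 * f_2 = f_2 * f_1$, so $\mathcal{H}(G,M,\chi)$ is commutative and $Ind_M^G(\chi)$ is multiplicity-free.

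The main obstacle is arranging the Mostow/Cartan-type decomposition $G = MAM$ so that $A$ is genuinely inverted by $\theta$; once this is in hand (a classical fact for compact symmetric pairs, where $A$ can be taken as $\exp$ of a maximal abelian subspace of the $(-1)$-eigenspace of $d\theta$), the character bookkeeping and Gelfand's trick are routine. A small side issue is that the statement does not force $M$ to be connected, but this does not affect the argument: $\sigma|_M$ is still inversion on all of $M$, which is all that is used.
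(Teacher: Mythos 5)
There is a genuine gap, and it sits exactly at the step you call routine: the identification of $\operatorname{End}_G(Ind_M^G\chi)$ with your mixed-equivariance algebra $\mathcal{H}(G,M,\chi)=\{f: f(m_1gm_2)=\chi(m_1)\chi(m_2)^{-1}f(g)\}$. If you write an intertwining operator on sections of the line bundle $\mathcal{L}_\chi$ as an integral operator with $G$-invariant kernel $K(x,y)=f(x^{-1}y)$, the kernel must transform with the \emph{same} power of $\chi$ on both sides, $f(m_1gm_2)=\chi(m_1)^{\pm 1}\chi(m_2)^{\pm 1}f(g)$ (same sign on both factors), whichever convention of induction you fix. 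Concretely, the $V$-isotypic block of $\operatorname{End}_G(Ind_M^G\chi)$ has dimension $m_{V,\chi}^2$, where $m_{V,\chi}=\dim\operatorname{Hom}_M(V|_M,\chi)$, while the $V$-block of your mixed space has dimension $m_{V,\chi}\,m_{V,\chi^{-1}}$; already for $M=G$ and $\chi^2\neq 1$ your space is zero while $\operatorname{End}$ is $\CC$. So commutativity of your algebra is not equivalent to multiplicity-freeness. Moreover, for the correct (same-sign) Hecke algebra your anti-involution $\sigma(g)=\theta(g^{-1})$ does not help: since $\chi\circ\sigma|_M=\chi^{-1}$, composition with $\sigma$ sends the $\chi$-algebra to the $\chi^{-1}$-algebra rather than preserving it, and the pointwise invariance computation on $MAM$ fails (one gets $\chi(m_1m_2)^{-1}f(a)$ versus $\chi(m_1m_2)f(a)$). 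What your computation genuinely proves is the Gelfand--Kazhdan-type bound $m_{V,\chi}\,m_{V,\chi^{-1}}\le 1$ for each irreducible $V$, which says nothing about a $V$ in which $\chi$ occurs but $\chi^{-1}$ does not; your argument is complete only when $\chi=\chi^{-1}$, which covers $\eps$-type characters of $S(O(p)\times O(q))$ but not the characters $\det^k\otimes 1$ of $U(p)\times U(q)$ that the paper needs. The parts you do carry out correctly (the Cartan decomposition $G=MAM$ with $\theta$ inverting $A$, the anti-automorphism reversing convolution) are fine; the character bookkeeping is precisely the non-routine point.

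For comparison, the paper avoids the Gelfand trick altogether: by Frobenius the multiplicity of $V$ is $\dim (V^*)^{M,\chi}$, and since symmetric subalgebras are spherical one has $\mathfrak{g}=\mathfrak{m}+\mathfrak{b}$ for a suitable Borel subalgebra; by PBW, $\mathcal{U}(\mathfrak{g})=\mathcal{U}(\mathfrak{m})\mathcal{U}(\mathfrak{b})$, so any $(M,\chi)$-equivariant functional is determined by its value on a highest weight vector, giving multiplicity at most one. That argument is insensitive to which character $\chi$ is used, which is exactly the uniformity your route lacks. If you want to salvage a convolution-algebra proof, you must find an anti-involution $\tau$ with $\tau(M)=M$, $\chi\circ\tau=\chi$ on $M$, and $\tau(g)\in MgM$ with trivial character discrepancy for generic $g$ (e.g.\ transpose-type maps combined with a Weyl group element of the restricted root system realizing $a\mapsto a^{-1}$ inside $M$); whether this can be done uniformly for all compact symmetric pairs and all $\chi$ is not obvious and essentially encodes the content of the lemma.
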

\begin{proof}
Let $V$ be an irreducible representation of $G$. Since $G$ is compact, $V$ is  finite-dimensional. The multiplicity of $V$ in $Ind_M^G(\chi)$ equals the dimension of $(V^*)^{M,\chi}$ (see e.g. Lemma 2.6 below). It is well known that there exists a Borel subalgebra $\mathfrak{b}$ of the Lie algebra $\mathfrak{g}$ of $G$ such that $\mathfrak{g}=\mathfrak{m}+\mathfrak{b}$, where $\mathfrak{m}$ is the Lie algebra of $M$. Let $v$ be a highest weight vector of $V$ and $\psi$ be its weight.

Let us show that any functional $\xi \in (V^*)^{M,\chi}$ is uniquely determined by its value on $v$.
Indeed, since $V$ is irreducible and $G$ is connected, any $w\in V$ equals $uv$, for some $u$ in the universal enveloping algebra $\mathcal{U}(\mathfrak{g})$.
Since $\mathfrak{g}=\mathfrak{m}+\mathfrak{b}$, we can present $u$ as $u=\sum_i m_i b_i$, where $m_i\in \mathcal{U}(\mathfrak{m})$ and $b_i\in \mathcal{U}(\mathfrak{b})$. Thus
$$\langle \xi , w \rangle =
\langle \xi , uv \rangle=\sum\langle \xi m_i , b_i v\rangle =
(\sum\chi(m_i)\psi(b_i))\langle \xi , v\rangle.$$
\end{proof}
%For triv. char: Cor 1.5.6 in Gangolli-Varadarajan book

\subsection{Finite dimensional representations}

Let $\left(  \phi,V\right)  $ be an irreducible finite dimensional
representation of a reductive group $G$. We are interested in the possibility
of realizing $\phi$ as a submodule or quotient of some $I\left(
P,\chi\right)  $, which we denote by $\phi\hookrightarrow I\left(
P,\chi\right)  $ and $I\left(  P,\chi\right)  \twoheadrightarrow\phi$
respectively. % \begin{lemma}
% \label{highest}We have $\dim V^{P,\chi}\leq1$ for all $\chi$, with equality
% for at most one $\chi$.
% \end{lemma}
%
% \begin{proof}
% This is obvious if $\dim V=1$, while $\dim V>1$ only occurs in the archimedean
% case, where the result follows from highest weight theory.
% \end{proof}
Let $\left(  \phi^{\ast},V^{\ast}\right)  $ be the contragredient
representation of $\left(  \phi,V\right)  $.
From  Lemmas
\ref{lem: delta1} and \ref{lem:duality} we obtain \Dima{the following lemma.}

\begin{lemma}[Frobenius reciprocity]
\label{lem:FinSubFrob} We have $\phi\hookrightarrow I\left(  P,\chi\right)  $(resp. $I\left(  P,\chi\right)  \twoheadrightarrow\phi$) if and only if $\left(  V^{\ast
}\right)  ^{P,\chi^{-1}\Delta_{P}^{-1/2}}$ (resp. $V^{P,\chi\Delta_{P}^{-1/2}%
}$) is nonzero. %For a given $P$, there is at most one such $\chi$ in each case.
\end{lemma}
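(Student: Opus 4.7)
The plan is to prove the embedding direction directly from Lemma \ref{lem: delta1}, and then deduce the quotient direction by dualizing via Lemma \ref{lem:duality}.

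For the first direction, I would start with an embedding $T\colon V\hookrightarrow I(P,\chi)$ and pair with $\varepsilon$: define $\eta\in V^*$ by $\eta(v):=\langle\varepsilon,T(v)\rangle=T(v)(1)$. Nonvanishing of $\eta$ follows from irreducibility of $V$ together with $T\ne 0$, since $\eta(v)=0$ for all $v$ would force all sections $T(v)$ to vanish at $1$ and hence (using $G$-equivariance and the transitivity of the right $G$-action on itself) to vanish everywhere. By $G$-equivariance of $T$ we have $R_p T(v)=T(\phi(p)v)$, so for $p\in P$,
\[
\eta(\phi(p)v)=T(\phi(p)v)(1)=\langle R_{p^{-1}}^{-1}\varepsilon,T(v)\rangle=\langle R_{p^{-1}}\varepsilon,T(v)\rangle\cdot \text{(correction)},
\]
which after applying Lemma \ref{lem: delta1} gives $\eta(\phi(p)v)=\chi(p)\Delta_P^{1/2}(p)\,\eta(v)$. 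Translating to $\phi^*$ via $(\phi^*(p)\eta)(v)=\eta(\phi(p^{-1})v)$, this reads $\phi^*(p)\eta=\chi^{-1}(p)\Delta_P^{-1/2}(p)\eta$, so $\eta\in (V^*)^{P,\chi^{-1}\Delta_P^{-1/2}}$. Conversely, given a nonzero such $\eta$, the map $T(v)(g):=\eta(\phi(g)v)$ is checked to lie in $C^\infty(G,P,\chi)$ (using the same character relation) and to be $G$-equivariant; irreducibility of $V$ then forces $T$ to be injective.

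For the quotient direction, combine Lemma \ref{lem:duality} with finite-dimensionality of $\phi$. Since $\phi$ is finite-dimensional and irreducible,
\[
\Hom_G(I(P,\chi),V)\cong \Hom_G(V^*,I(P,\chi)^\vee)\cong \Hom_G(V^*,I(P,\chi^{-1})),
\]
where the second isomorphism uses Lemma \ref{lem:duality}. Thus $I(P,\chi)\twoheadrightarrow\phi$ if and only if $\phi^*\hookrightarrow I(P,\chi^{-1})$, which by the first part is equivalent to $(V^{**})^{P,\chi\Delta_P^{-1/2}}\ne 0$, i.e.\ $V^{P,\chi\Delta_P^{-1/2}}\ne 0$.

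The only delicate point is the character bookkeeping: one must keep track of which side of the inverse the $\Delta_P^{1/2}$ lands on when passing between $\phi$ and $\phi^*$ and between $\chi$ and $\chi^{-1}$. The calculation is routine once the convention $(\phi^*(g)\xi)(v)=\xi(\phi(g^{-1})v)$ is fixed, but it is the one step where a sign or inverse error would propagate to the rest of the paper, so I would verify it carefully at the outset.
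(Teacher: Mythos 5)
Your proposal is correct and follows essentially the same route as the paper: evaluation at the identity (Lemma \ref{lem: delta1}) turns an embedding into a $(\chi^{-1}\Delta_P^{-1/2})$-equivariant functional and conversely the matrix-coefficient formula $T(v)(g)=\eta(\phi(g)v)$ builds the embedding, with the quotient case obtained by dualizing via Lemma \ref{lem:duality}. The only blemish is the garbled intermediate display with the ``(correction)'' factor, but the identity you actually use, $\eta(\phi(p)v)=T(v)(p)=\chi(p)\Delta_P^{1/2}(p)\eta(v)$, is exactly right, so no gap results.
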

\begin{proof}
\Dima{Given a non-zero $l\in \left(  V^{\ast
}\right)  ^{P,\chi^{-1}\Delta_{P}^{-1/2}}$ we define a morphism $\phi\to I\left(  P,\chi\right)$ by $v \mapsto f$ with $f(g)=l(\phi(g^{-1})v)$. This map is an embedding since $\phi$ is irreducible. Conversely, an embedding  $\phi\hookrightarrow I\left(
P,\chi\right)  $ defines a functional on $\phi$ by composition with $\varepsilon\in\mathcal{E}^{\prime}\left(  G\right)  $. Dualizing $\phi$ and  $I\left(
P,\chi\right)  $ we obtain the statement on $I\left(  P,\chi\right)  \twoheadrightarrow\phi$.}
\end{proof}

% \begin{proof}
% If $\phi\hookrightarrow I\left(  P,\chi\right)  $ then by Lemma
% \ref{lem: delta1} the restriction $\varepsilon|_{V}$ gives an element in
% $\left(  V^{\ast}\right)  ^{P,\chi^{-1}\Delta_{P}^{-1/2}}$, easily seen to be
% nonzero. Conversely the matrix coefficient with respect to such an element
% provides an imbedding $\phi\hookrightarrow I\left(  P,\chi\right)  $. Next by
% Lemma \ref{lem:duality}, we see that
% \[
% I\left(  P,\chi^{-1}\right)  \twoheadrightarrow\phi^{\ast}\iff\phi
% \hookrightarrow I\left(  P,\chi\right)  \iff\left(  V^{\ast}\right)
% ^{P,\chi^{-1}\Delta_{P}^{-1/2}}\neq0.
% \]
% Replacing $\phi$ by $\phi^{\ast}$ and $\chi$ by $\chi^{-1}$, we deduce
% $I\left(  P,\chi\right)  \twoheadrightarrow\phi\iff$ $V^{P,\chi\Delta
% _{P}^{-1/2}}\neq0$.
% %The second part of the Lemma follows from Lemma \ref{highest}.
% \end{proof}

\begin{cor}
\label{cor:NAFinSub} Let $\psi$ be a character of $G_{n}$. Then
$\dim\operatorname{Hom}(\psi,\chi\times 1)=1$ if and only if $\psi=\nu^{-p_{1}/2}$ and $\chi = \nu^{-n/2}$. Otherwise  $\operatorname{Hom}(\psi, \chi\times 1)=0$.
\end{cor}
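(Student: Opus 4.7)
The plan is a direct application of the Frobenius reciprocity of Lemma~\ref{lem:FinSubFrob} to the one-dimensional representation $\phi = \psi$. The proof of that lemma in fact produces an isomorphism $\mathrm{Hom}(\psi, \chi \times 1) \cong (V^*)^{P, (\chi\otimes 1)^{-1}\Delta_P^{-1/2}}$, and since $\psi$ is a character we have $V = V^* = \mathbb{C}$ with $G$-action by $\psi$ and $\psi^{-1}$ respectively. The right-hand side is therefore at most one-dimensional, so the dimension count immediately reduces to deciding when two specific characters of $P$ coincide.

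Next, I would write out both characters explicitly on an element $x \in P$ as in~\eqref{=x}. Using $\Delta_P = \nu^{p_2} \otimes \nu^{-p_1}$, the required equality of characters becomes
$$\psi^{-1}(x_{11})\,\psi^{-1}(x_{22}) \;=\; \chi^{-1}(x_{11})\,\nu(x_{11})^{-p_2/2}\,\nu(x_{22})^{\,p_1/2}.$$
Since a character of the Levi $G_{p_1} \times G_{p_2}$ is freely determined by its restriction to each factor, this single equation is equivalent to the pair of conditions
$$\psi|_{G_{p_1}} \;=\; \chi \cdot \nu^{p_2/2}, \qquad \psi|_{G_{p_2}} \;=\; \nu^{-p_1/2}.$$

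Finally, I would invoke the standard fact that every continuous character of $G_k$ factors through $\det\colon G_k \to F^\times$ and is therefore uniquely determined by the induced character of $F^\times$. Because $\det\colon G_{p_2} \to F^\times$ is surjective, the second equation forces $\psi = \nu^{-p_1/2}$ globally on $G_n$; substituting into the first yields $\chi = \nu^{-p_1/2}\cdot \nu^{-p_2/2} = \nu^{-n/2}$. Conversely, one checks immediately that this pair of characters satisfies both equations, so the semi-invariant space is indeed one-dimensional in this case, and zero in every other case. There is no serious obstacle in the argument; the only point requiring care is the bookkeeping of the normalizing twist $\Delta_P^{1/2}$ and the fact that a character of $P$ is specified independently on each of the two block-diagonal Levi factors.
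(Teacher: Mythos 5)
Your argument is correct and is exactly the route the paper intends: the corollary is stated as an immediate consequence of Lemma \ref{lem:FinSubFrob}, and your computation of the semi-invariance condition $\psi|_P=(\chi\otimes 1)\Delta_P^{1/2}$ on the two Levi blocks, together with the fact that characters of $G_k$ factor through $\det$, is precisely the omitted verification. The dimension count via the one-dimensionality of $(V^*)^{P,\,\cdot}$ (using that $T\mapsto\varepsilon\circ T$ is injective) is also handled correctly.
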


From highest weight theory we also obtain

\begin{cor}
\label{cor:AFinSub} $\chi \times 1$ has a finite-dimensional
submodule if and only if either

\begin{enumerate}
[(i)]
\item \label{it:Finpadic}  $\chi=\nu^{-n/2}$.

\item \label{it:FinR}$F=\mathbb{R},$ and $\chi = \eps^{k} \nu^{-k-n/2}$ for some  non-negative
integer $k$.

\item \label{it:FinC}$F=\mathbb{C}$ and $\chi = \alpha^{(l-k)/2} \nu^{-(k+l+n)/2}$ for some  non-negative
integers $k,l$ of the same parity.
\end{enumerate}
\end{cor}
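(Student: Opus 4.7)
The plan is to apply the Frobenius reciprocity of Lemma \ref{lem:FinSubFrob}: an irreducible finite-dimensional $(\phi, V)$ embeds into $\chi \times 1 = I(P, \chi \otimes 1)$ if and only if $(V^{*})^{P, (\chi \otimes 1)^{-1} \Delta_P^{-1/2}}$ is nonzero. Since $P$ contains the upper triangular Borel $B$, any $P$-semi-invariant vector in $V^{*}$ is automatically a highest weight vector of $V^{*}$, so for irreducible $V$ such a line is unique and its character on the diagonal torus is determined by the lowest weight of $V$. The problem then reduces to identifying the irreducible finite-dimensional representations $V$ of $\GL(n,F)$ such that the highest weight line in $V^{*}$ (a) actually transforms under the full Levi $L = \GL(p_1) \times \GL(p_2)$ by a character---this is a block-constancy condition on the highest weight of $V$---and (b) transforms by precisely $(\chi \otimes 1)^{-1} \Delta_P^{-1/2}$.

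Case (i) is immediate: for non-archimedean $F$ the only smooth irreducible finite-dimensional representations of $\GL(n,F)$ are characters, so by Corollary \ref{cor:NAFinSub} the only possibility is $\chi = \nu^{-n/2}$. For case (ii), I would parametrize irreducible continuous finite-dimensional representations of $\GL(n, \RR)$ as $V_\lambda \otimes \nu^s \otimes \varepsilon^\eta$ with $\lambda \in \Z^n$ dominant, $s \in \C$, $\eta \in \{0, 1\}$. The block-constancy condition forces $\lambda = (a, \dots, a, b, \dots, b)$ with $a$ occurring $p_2$ times first and $b$ occurring $p_1$ times, $a \geq b$ integers. Matching the two block characters gives two equations; the one on $\GL(p_2)$ pins down $s$ and the parity of $\eta$ in terms of $a$, and substituting into the one on $\GL(p_1)$ expresses $\chi$ in terms of $k := a - b \in \Z_{\geq 0}$, producing exactly $\chi = \varepsilon^k \nu^{-k - n/2}$.

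Case (iii) proceeds analogously using the parametrization of irreducible continuous finite-dimensional representations of $\GL(n, \C)$ (viewed as a real Lie group) as $V_\lambda \otimes \bar{V}_\mu \otimes \nu^s$, where $\lambda, \mu$ are dominant integer weights and $s \in \C$. The block-constancy argument applied to both $\lambda$ and $\mu$ introduces two non-negative integer parameters $k, l$ coming from the differences of the two block values; matching on $\GL(p_2)$ forces the top blocks of $\lambda$ and $\mu$ to coincide and determines $s$, and matching on $\GL(p_1)$ then yields $\chi$ in the claimed form. The main obstacle is the character bookkeeping in case (iii): one must convert between the description of characters of $\GL(p, \C)$ as $\alpha^m \nu^\sigma$ and as $z^a \bar z^b$, and check that the integrality of $\lambda$ and $\mu$ together with the constraint that the resulting expression is a well-defined character of $\GL(p_1, \C)$ produces exactly the same-parity condition on $k$ and $l$.
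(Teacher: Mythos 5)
Your approach is the same as the paper's: for this corollary the paper offers nothing beyond the phrase ``from highest weight theory'', i.e.\ exactly the combination of Lemma \ref{lem:FinSubFrob} with the highest-weight classification of finite-dimensional representations that you spell out. Your treatment of the non-archimedean case via Corollary \ref{cor:NAFinSub} is right, and your case (ii) computation checks out: block-constancy of the $P$-eigenweight of $V^{*}$, the matching on the $\GL(p_2)$-block fixing $s$ and the parity of $\eta$, and the matching on the $\GL(p_1)$-block yielding $\eps^{k}\nu^{-k-n/2}$ all come out as you say.

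The weak point is case (iii), which is precisely the part you did not carry out, and the claimed endpoint is not what the computation delivers. Running the same argument over $\C$, the $P$-eigenvector of $V^{*}$ has weight $(\eta_1,\dots,\eta_1,\eta_2,\dots,\eta_2)$ with $\eta_2=\nu^{p_1/2}$ and $\eta_1\eta_2^{-1}(z)=z^{k}\bar z^{\,l}$, where $k,l$ are \emph{independent} non-negative integers (the holomorphic and antiholomorphic dominance conditions are separate); this gives $\chi=\det^{-k}\,\overline{\det}^{\,-l}\,\nu^{-n/2}$, i.e.\ $\alpha^{\,l-k}\nu^{-(k+l+n)/2}$ for the unitary $\alpha$, with \emph{no} parity constraint and with exponent $l-k$ rather than $(l-k)/2$. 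So the assertion that the bookkeeping ``produces exactly the same-parity condition'' is exactly the step you have not verified, and a direct check shows it does not: for $\GL(2,\C)$ the standard representation, twisted by the character $(z\nu^{1/2})^{-1}\circ\det$, is a submodule of $\chi\times 1$ with $\chi(z)=z^{-2}\bar z^{-1}=\alpha^{-1}\nu^{-3/2}$, where $k+l=1$ is odd and the $\nu$-exponent does not lie in $-n/2-\Z$; its complex conjugate $\chi(z)=z^{-1}\bar z^{-2}$ works as well (the answer set must be stable under $\chi\mapsto\bar\chi$), yet neither is of the printed form under the unitary reading of $\alpha$. To complete case (iii) you must either pin down a convention for $\alpha$ under which the printed formula coincides with the set $\{\det^{-k}\overline{\det}^{\,-l}\nu^{-n/2}:k,l\geq 0\}$, or record that the formula needs the corresponding adjustment; as written, the decisive conversion is missing and, taken literally, would fail.
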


\subsection{Derivatives}

\label{sec:der}

%In this subsection we have no assumption on the local field $F$.

If $\chi$ is a character of $G_{p}$ with $p>0$, we write $\chi^{\prime}$ for
its restriction to $G_{p-1}$.

%?? add the definitions of $\Phi$

We will prove the main results  by induction on $n$,
using results from \cite{BZ-Induced, AGS}. We refer the reader to those papers
for the notion of \emph{depth} for an admissible representation of $G_{n},$
and for the definition of the functor $\Phi$ which maps admissible
representations of $G_{n}$ of  depth $\leq2$ to admissible
representations of $G_{n-2}$. In \cite{BZ-Induced} this functor is denoted $\Phi^-$.

\begin{proposition}\label{prop:der}
(\cite{BZ-Induced, AGS})

\begin{enumerate}
\item $\Phi$ is an exact functor and $\Phi(\chi_{1}\times\chi_{2})=\chi
_{1}^{\prime}\times\chi_{2}^{\prime}$.

\item Every subquotient of $\chi_{i}\times\chi_{j}$ has depth $\leq2.$

\item If $\pi$ has depth $1$ then $\pi$ is finite dimensional and $\Phi(\pi)=0$.

\item If $\pi$ has depth $2$ then $\Phi\left(  \pi\right)  \neq0.$
\end{enumerate}
\end{proposition}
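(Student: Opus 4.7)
My plan is to obtain all four assertions by combining the geometric structure of the Grassmannian $G_n/P_{p_1,p_2}$ with the Bernstein--Zelevinsky derivative formalism of \cite{BZ-Induced} and its archimedean extension \cite{AGS, AGS2}. The common mechanism in both settings is the \emph{geometric lemma}: the restriction of a parabolically induced representation to a mirabolic subgroup carries a canonical filtration indexed by the orbits of the mirabolic on the flag variety, and each subquotient is itself built from data on a single orbit.

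For (1), exactness of $\Phi$ should follow directly from its construction as a composition of Jacquet-type functors (invariants/coinvariants along unipotent radicals), which are exact in the non-archimedean smooth category, and, thanks to the analytic work of \cite{AGS, AGS2}, in the archimedean Casselman--Wallach category as well. For the identity $\Phi(\chi_1\times\chi_2)=\chi_1'\times\chi_2'$, I would apply the geometric lemma to identify the BZ filtration on the restriction of $\chi_1\times\chi_2$ to the mirabolic, note that after extracting the top derivative only the open mirabolic orbit survives, and iterate the argument once more to realise $\Phi$ as restricting $\chi_i$ to $G_{p_i-1}$ and re-inducing to $G_{n-2}$.

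For (2), the crucial geometric input is that $G_n/P_{p_1,p_2}$ is a Grassmannian, so the mirabolic $P_{n-1}\subset G_n$ has only two orbits on it (the open dense cell and one closed Schubert divisor). This forces the BZ filtration on $(\chi_1\times\chi_2)$ restricted to the mirabolic to have length at most two, which is precisely the statement that the depth is bounded by $2$. By the exactness established in (1), this bound passes to all subquotients automatically.

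For (3) and (4), I would invoke the standard dictionary between depth and nonvanishing of BZ derivatives: a representation has depth $d$ iff its top nonzero BZ derivative is in degree $d$, and $\Phi$ is built exactly to compute the $d=2$ piece. Depth $1$ means the filtration is concentrated on the closed orbit, where the unipotent radical acts trivially; combined with the remaining $G_n$-action, this forces the representation to be a character, hence finite dimensional and annihilated by $\Phi$. Depth $2$ by definition means a nontrivial contribution from the open orbit, which is precisely what $\Phi(\pi)\neq 0$ detects. The principal obstacle to making this plan airtight is the archimedean case, where exactness of $\Phi$ and the precise control of derivatives require genuine topological vector space input from \cite{AGS, AGS2}; my strategy is therefore to treat the orbit-theoretic picture uniformly over all local fields and to invoke those references for the underlying analytic facts.
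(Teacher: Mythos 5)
A preliminary remark on the comparison: the paper does not prove Proposition \ref{prop:der} at all --- it is imported wholesale from \cite{BZ-Induced} in the non-archimedean case and from \cite{AGS,AGS2} in the archimedean case --- so your sketch must be judged against those references. Measured that way, your treatment of (1), (3) and (4) is broadly the standard one: twisted Jacquet-type functors are exact on smooth representations of $p$-adic groups; depth $1$ means the unipotent radical of the mirabolic acts trivially, its conjugates generate $\mathrm{SL}_n$, so $\pi$ factors through $\det$ and (being admissible of finite length) is finite dimensional with $\Phi(\pi)=0$; and, like the paper, you delegate the archimedean exactness and the depth/derivative dictionary to \cite{AGS,AGS2}, which is legitimate here since the paper itself treats them as a black box.

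The genuine gap is in your argument for (2). The two-orbit count for the mirabolic acting on $G_n/P_{p_1,p_2}$ bounds the length of the Mackey (geometric-lemma) filtration of $(\chi_1\times\chi_2)$ restricted to the mirabolic; it does not bound the depth, which is the length of a different filtration (the one with subquotients $(\Phi^+)^{k-1}\Psi^+(\pi^{(k)})$). If ``two orbits $\Rightarrow$ depth $\le 2$'' were a valid inference it would apply to any representation induced from $P_{p_1,p_2}$, and that is false: for $\sigma_1\times\sigma_2$ with $\sigma_i$ supercuspidal the depth is $p_1+p_2=n>2$ once $n\ge 3$, although the orbit picture on the Grassmannian is identical. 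What actually gives the bound is the Leibniz rule for derivatives of induced representations in \cite{BZ-Induced} (and its archimedean analogue in \cite{AGS}): the $k$-th derivative of $\chi_1\times\chi_2$ is glued from $\chi_1^{(i)}\times\chi_2^{(k-i)}$, and a \emph{character} $\chi$ of $G_p$ satisfies $\chi^{(1)}=\chi'$ and $\chi^{(k)}=0$ for $k\ge 2$, because $\chi$ is trivial on unipotent elements and hence restricts to the mirabolic of $G_p$ as $\Psi^+(\chi')$. This one-dimensionality of the inducing data is the essential input that never appears in your part (2), and it is also what upgrades your part (1) from ``only the open orbit survives'' to the exact identity $\Phi(\chi_1\times\chi_2)=\chi_1'\times\chi_2'$: the terms $\chi_1^{(2)}\times\chi_2^{(0)}$ and $\chi_1^{(0)}\times\chi_2^{(2)}$ vanish, leaving the single middle term. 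Two smaller slips: the closed mirabolic orbit $\{V\subseteq H\}$ has codimension $p_1$ in the Grassmannian, so it is a divisor only when $p_1=1$; and in the archimedean case the inheritance of the depth bound by subquotients is not a formal consequence of exactness of $\Phi$ on the depth-$\le 2$ category (that would be circular), but again part of what \cite{AGS,AGS2} establish.
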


If $p_{1}=1$ (resp. $p_2=1$) then $\chi
_{1}^{\prime}\times\chi_{2}^{\prime}$ means just $\chi
_{2}^{\prime}$ (resp. $\chi_{1}^{\prime}$).
%?? Last part follows from Deg_Whit

% \begin{proof}
% It follows from the proposition that if $H_{0}\left(  \mathfrak{X}\right)  =0$
% then $\Phi:H(\mathfrak{X})\rightarrow H(\mathfrak{X}^{\prime})$ is an imbedding.
% \end{proof}

% \begin{lemma}
% \label{lem:Uniq} If $\dim V>1$ then $\phi\hookrightarrow I\left(
% P,\chi\right)  $ (resp. $I\left(  P,\chi\right)  \twoheadrightarrow\phi$) for
% at most one $\left(  P,\chi\right)  \in\mathcal{M}$.
% \end{lemma}
%
% \begin{proof}
% By Lemma \ref{lem:FinSubFrob} it suffices to show that if $\left(  V^{\ast
% }\right)  ^{P_{1},\chi_{1}},\left(  V^{\ast}\right)  ^{P_{2},\chi_{2}}\neq0$
% for $\left(  P_{i},\chi_{i}\right)  \in\mathcal{M}$ then $P_{1}=P_{2}$. By the
% Lemmas \ref{highest} and \ref{lem:FinSubFrob} we conclude that $\chi_{1}$,
% $\chi_{2}$ have the same restriction $\chi_{0}$ (say) to $P_{0}$, and that%
% \[
% \left(  V^{\ast}\right)  ^{P_{1},\chi_{1}}=\left(  V^{\ast}\right)
% ^{P_{0},\chi_{0}}=\left(  V^{\ast}\right)  ^{P_{2},\chi_{2}}%
% \]
%
%
% If $P_{1},P_{2}$ were\emph{ different} maximal parabolic subgroups then they
% would generate $G$, and the one-dimensional space $\left(  V^{\ast}\right)
% ^{P_{0},\chi_{0}}$ would be $G$-invariant, contradicting the assumption %that%
%$V$, and hence $V^{\ast}$, is irreducible of dimension $>1$.
%\end{proof}

\subsection{Rank} \label{subsec:rank}

The rank of representations of $GL_n(F)$ for $F$ of  zero characteristic was defined in \cite{GSRank} using the notion of wave-front set, which is a certain union of  coadjoint nilpotent orbits. The rank is defined to be the maximal among the ranks of all the matrices in the  wave-front set.
For archimedean $F$ the dimension of the wave-front set equals \Dima{twice} the Gelfand-Kirillov dimension.

By Jordan's theorem the nilpotent orbits in $\mathfrak{gl}_n$ are given by partitions of $n$, i.e. the sizes of Jordan blocks. By e.g.
\cite[Theorem 2]{BB} the wave-front set of $\chi_{1}\times\chi_{2}$  is the closure of the nilpotent orbit given by the nilpotent orbit with $r$ blocks of size $2$ and $n-r$ blocks of size $1$. We will shortly denote such a partition by $2^r1^{n-r}$. Thus,  $rank(\chi_{1}\times\chi_{2})=r$.
The closure ordering on the orbits in the closure of $2^r1^{n-r}$  is linear, and thus the wave-front set of any subquotient $\pi$ of $\chi_{1}\times\chi_{2}$  is the closure of the nilpotent orbit given by the partition $2^{rank(\pi)}1^{n-rank(\pi)}$.
The dimension of this orbit is $2rank(\pi)(n-rank(\pi))$.

Since in this paper we want to consider fields of arbitrary characteristic we give the following definition of rank.

\begin{defn}\label{def:rank}
For a subquotient $\pi$ of $\chi_{1}\times\chi_{2}$ we define the rank of
$\pi$ to be the number $k$ such that $\Phi^{k}(\pi)$ is finite-dimensional and non-zero. In
particular, if $\pi$ is finite-dimensional then $rank(\pi)=0$ and if $\pi
=\chi_{1}\times\chi_{2}$ then $rank(\pi)=\min(p_{1},p_{2})$.
\end{defn}

For $F$ of  zero characteristic this definition coincides with the one given in \cite{GSRank}. This follows from \cite[Theorem 5.0.4]{GS-Gen} for Archimedean $F$ and from \cite[Theorem B]{GSRank} for non-Archimedean $F$.

\subsection{Infinitesimal characters}
\label{sec:InfChar}

In this subsection we assume that $F$ is Archimedean.
%
%
% For a real reductive group $G$ we write $Rep\left(  G\right)  $ for the
% category of smooth admissible {Fr\'{e}chet \,} representations of moderate
% growth and finite length. Let $P$ be a parabolic subgroup of $G$ with modular
% function $\Delta_{P}$ and Levi decomposition $P=LN$. For $\left(
% \pi,V\right)  \in Rep\left(  L\right)  $ the (normalized) induced
% representation $I\left(  P,\pi\right)  \in Rep\left(  G\right)  $ is defined
% to be the right translation action $R_{g}f\left(  x\right)  =f\left(
% xg\right)  $ of $G$ on the space
% \begin{equation}
% \left\{  f\in C^{\infty}(G, V)\mid f\left(  lng\right)  =\Delta_{P}\left(
% l\right)  ^{1/2}\pi\left(  l\right)  f\left(  g\right)  \text{ }\forall g\in
% G,l\in L,n\in N\right\}  \label{=ind}%
%\end{equation}
We denote the Lie algebra of $G$ by $\mathfrak{g}%
_{0}$ and its complexification by $\mathfrak{g}$, and we adopt similar
terminology for subgroups of $G$ and their Lie algebras. Let $\mathcal{Z}%
=\mathcal{Z}\left(  \mathfrak{g}\right)  $ be the center of the enveloping
algebra $\mathcal{U}\left(  \mathfrak{g}\right)  $ and let $\mathcal{Z}%
^{\prime}$ be the set of its multiplicative characters. We say that $\pi\in
Rep\left(  G\right)  $ has infinitesimal character $\xi\in Z^{\prime}$ if for
any $z\in\mathcal{Z}, \,\pi\left(  z\right)  $ acts by the scalar $\xi\left(
z\right)  $. If $\pi$ is irreducible then it has an infinitesimal character.

\begin{lemma}
If $\pi,\pi^{\prime}\in Rep\left(  G\right)  $ have infinitesimal characters
$\xi,\xi^{\prime}$, then Hom$_{G}\left(  \pi,\pi^{\prime}\right)  =0$ unless
$\xi=\xi^{\prime}.$
\end{lemma}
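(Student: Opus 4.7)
The plan is to reduce the statement to the observation that any $G$-intertwining map automatically commutes with the $\mathcal{Z}$-action, and then exploit the fact that $\mathcal{Z}$ acts by scalars on $\pi$ and $\pi'$.

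More concretely, let $T \in \Hom_G(\pi,\pi')$ and let $z \in \mathcal{Z}$. First I would argue that $T$ commutes with the action of $z$. Since $T$ is continuous and $G$-equivariant, it restricts to a map between the spaces of smooth vectors, and on smooth vectors the derived action of the Lie algebra $\mathfrak{g}_0$ (and hence of the complexified enveloping algebra $\mathcal{U}(\mathfrak{g})$) is obtained by differentiating the $G$-action. Thus $T$ intertwines the $\mathcal{U}(\mathfrak{g})$-actions, and in particular $T \circ \pi(z) = \pi'(z) \circ T$ for every $z \in \mathcal{Z}$.

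Using that $\pi(z) = \xi(z)\cdot \Id$ and $\pi'(z) = \xi'(z)\cdot \Id$ by hypothesis, this identity becomes
\[
\xi(z)\, T = \xi'(z)\, T \quad \text{for all } z \in \mathcal{Z}.
\]
If $\xi \neq \xi'$, choose $z_0 \in \mathcal{Z}$ with $\xi(z_0) \neq \xi'(z_0)$; then $(\xi(z_0)-\xi'(z_0))\,T = 0$ forces $T=0$, as required.

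The only point requiring a bit of care is the first step, namely that a $G$-intertwiner between (smooth) representations commutes with $\mathcal{Z}$; once it is said that both sides of $T\pi(z) = \pi'(z)T$ make sense on smooth vectors, the conclusion is immediate. All the remaining manipulation is linear algebra over $\C$, so there is no real obstacle.
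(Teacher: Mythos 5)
Your argument is correct and is the standard one: a continuous $G$-intertwiner differentiates to a $\mathcal{U}(\mathfrak{g})$-intertwiner on smooth vectors, hence commutes with $\mathcal{Z}$, and the scalar actions force $T=0$ when $\xi\neq\xi'$. The paper states this lemma without proof as standard background, and in its setting all representations are smooth (so every vector is smooth and no density argument is even needed), so your proof is exactly what is intended.
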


%\begin{proof}
%Let $T\in$ Hom$_{G}\left(  \pi,\pi^{\prime}\right)  $ then for all
%$z\in\mathcal{Z}$ we have
%\[
%0=T\pi\left(  z\right)  -\pi^{\prime}\left(  z\right)  T=\left[  \xi\left(
%z\right)  -\xi^{\prime}\left(  z\right)  \right]  T\text{ }%
%\]
%Hence if $T$ is non-zero we must have $\xi=\xi^{\prime}$.
%\end{proof}

Fix a Cartan subalgebra and a Borel sublagebra $\mathfrak{h\subset b\subset
g}$. Let $W=W\left(  \mathfrak{g,h}\right)  $ be the corresponding Weyl group
and let $\rho$ be the half-sum of roots of $\mathfrak{h}$ in $\mathfrak{b}$.

\begin{theorem}
(Harish-Chandra) There is an isomorphism $\mu\mapsto\xi_{\mu}:\mathfrak{h}%
^{\ast}/W\rightarrow\mathcal{Z}^{\prime}$.
\end{theorem}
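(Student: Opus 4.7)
The plan is to construct the map explicitly as the composition of the \emph{Harish-Chandra homomorphism} with the spectrum functor, and then establish bijectivity in two independent steps (injectivity via a Verma module argument, surjectivity via an associated-graded reduction to Chevalley's theorem).

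First I would fix the triangular decomposition $\mathfrak{g}=\mathfrak{n}^{-}\oplus\mathfrak{h}\oplus\mathfrak{n}^{+}$ associated to $\mathfrak{b}=\mathfrak{h}\oplus\mathfrak{n}^{+}$. The Poincar\'{e}--Birkhoff--Witt theorem gives a direct sum decomposition
\[
\mathcal{U}(\mathfrak{g})=\mathcal{U}(\mathfrak{h})\oplus\bigl(\mathfrak{n}^{-}\mathcal{U}(\mathfrak{g})+\mathcal{U}(\mathfrak{g})\mathfrak{n}^{+}\bigr),
\]
and restricting the projection onto $\mathcal{U}(\mathfrak{h})=S(\mathfrak{h})$ to $\mathcal{Z}$ yields an algebra homomorphism $p:\mathcal{Z}\rightarrow S(\mathfrak{h})$. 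Composing with the algebra automorphism of $S(\mathfrak{h})$ induced by the shift $h\mapsto h-\rho(h)$ defines the Harish-Chandra homomorphism $\gamma:\mathcal{Z}\rightarrow S(\mathfrak{h})$. The geometric content is that if $M(\lambda)$ denotes the Verma module with highest weight $\lambda-\rho$, then each $z\in\mathcal{Z}$ acts on the highest weight vector $v_{\lambda}$ by the scalar $\gamma(z)(\lambda)$; by universality this is the scalar by which $z$ acts on all of $M(\lambda)$.

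The next step is to show $\mathrm{Im}(\gamma)\subseteq S(\mathfrak{h})^{W}$. For this it suffices to check $W$-invariance on simple reflections $s_{\alpha}$, and this reduces to the statement that whenever $\lambda$ and $s_{\alpha}\lambda=\lambda-\langle\lambda,\alpha^{\vee}\rangle\alpha$ differ by a non-negative integer multiple of a simple root, the central character of $M(\lambda)$ agrees with that of $M(s_{\alpha}\lambda)$. The key input is that for such integral dominance there is a nonzero $\mathfrak{g}$-homomorphism $M(s_{\alpha}\lambda)\to M(\lambda)$, forcing the central characters to coincide. Polynomial identities that hold on the Zariski-dense subset of such integral $\lambda$ then hold on all of $\mathfrak{h}^{\ast}$, yielding $W$-invariance.

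Next I would prove injectivity: a central element $z$ with $\gamma(z)=0$ acts as zero on every Verma module and hence on each of their irreducible quotients $L(\lambda)$; since any finite-dimensional $\mathfrak{g}$-module is built from such simple objects and the family separates $\mathcal{Z}$, one gets $z=0$. Surjectivity onto $S(\mathfrak{h})^{W}$ is the main obstacle. I would pass to associated graded algebras with respect to the PBW filtration: $\mathrm{gr}\,\mathcal{Z}$ sits inside $S(\mathfrak{g})^{G}$, and under the Chevalley restriction theorem $S(\mathfrak{g})^{G}\xrightarrow{\sim}S(\mathfrak{h})^{W}$; checking that $\mathrm{gr}(\gamma)$ coincides with Chevalley's isomorphism gives surjectivity of $\gamma$ by a standard filtered/graded lifting argument.

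Finally, to obtain the stated bijection, I would take spectra: the multiplicative characters $\mathcal{Z}^{\prime}$ are in bijection with characters of $S(\mathfrak{h})^{W}$ via $\gamma$, and by Chevalley's theorem again (or by the fact that $S(\mathfrak{h})\to S(\mathfrak{h})^{W}$ is a finite free extension whose fiber over a character $\mu$ is the $W$-orbit $W\mu$), characters of $S(\mathfrak{h})^{W}$ are in natural bijection with $\mathfrak{h}^{\ast}/W$. The assignment $\mu\mapsto\xi_{\mu}$ where $\xi_{\mu}(z):=\gamma(z)(\mu)$ is then the desired isomorphism; irreducibility of infinitesimal characters is automatic because $\xi_{\mu}=\xi_{\mu^{\prime}}$ iff $\gamma(z)(\mu)=\gamma(z)(\mu^{\prime})$ for every $z$, iff $\mu$ and $\mu^{\prime}$ lie in the same $W$-orbit.
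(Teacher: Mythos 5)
The paper does not prove this statement at all: it is Harish-Chandra's classical theorem, quoted in the preliminaries as background, and the only thing the surrounding text actually uses is the resulting parametrization of infinitesimal characters by $W$-orbits (multisets, for $\mathfrak{gl}_n$) in $\mathfrak{h}^{\ast}$. Your outline is the standard textbook proof (Humphreys, Dixmier, Knapp) and is essentially correct: PBW projection plus $\rho$-shift to define $\gamma$; $W$-invariance of the image via the Verma module homomorphisms $M(s_{\alpha}\lambda)\to M(\lambda)$ for $\langle\lambda,\alpha^{\vee}\rangle\in\mathbb{Z}_{\geq 0}$ together with Zariski density; injectivity from the fact that finite-dimensional representations separate points of $\mathcal{U}(\mathfrak{g})$ (this separation statement is itself a nontrivial theorem of Harish-Chandra and should be cited, not waved at); surjectivity by comparing $\mathrm{gr}\,\gamma$ with the Chevalley restriction isomorphism; and passage to characters using that $S(\mathfrak{h})$ is finite over $S(\mathfrak{h})^{W}$ and that $W$-invariant polynomials separate $W$-orbits. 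If you were to write it out in full, two points need care: first, the surjectivity step requires knowing $\mathrm{gr}\,\mathcal{Z}=S(\mathfrak{g})^{\mathfrak{g}}$ (via the equivariant symmetrization map and complete reducibility) and checking that the $\rho$-shift only affects lower filtration degree, so that $\mathrm{gr}\,\gamma$ really is the restriction map; second, the group in this paper is $GL(n,\mathbb{R})$ or $GL(n,\mathbb{C})$, so $\mathfrak{g}=\mathfrak{gl}_n(\mathbb{C})$ is reductive rather than semisimple, and the ingredients you invoke (Chevalley restriction, the separation theorem, the Verma module theory) are usually stated for semisimple $\mathfrak{g}$; they do extend to $\mathfrak{gl}_n$ by splitting off the one-dimensional center, but that reduction should be said explicitly.
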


We often say that a representation has infinitesimal character $\mu$ instead
of $\xi_{\mu}$.

\begin{lemma}
If $\phi$ is a finite dimensional representation with highest weight $\mu
\in\mathfrak{h}^{\ast}$ then $\phi$ has infinitesimal character $\mu+\rho$.
\end{lemma}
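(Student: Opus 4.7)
The plan is to apply the standard PBW argument to a highest weight vector and then match the resulting scalar with $\xi_{\mu+\rho}$ via the normalization of the Harish-Chandra isomorphism. First I would fix a highest weight vector $v_+\in\phi$ of weight $\mu$, so that $Hv_+ = \mu(H)v_+$ for all $H\in\mathfrak{h}$ and $\mathfrak{n}^+ v_+ = 0$, where $\mathfrak{n}^+$ is the nilpotent radical of $\mathfrak{b}$ and $\mathfrak{n}^-$ the opposite nilradical. Since $\phi$ is irreducible and finite-dimensional, every $z\in\mathcal{Z}$ acts on $\phi$ by a single scalar, which is recorded by its action on $v_+$.

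Next I would invoke the PBW decomposition $\mathcal{U}(\mathfrak{g})\cong\mathcal{U}(\mathfrak{n}^-)\otimes\mathcal{U}(\mathfrak{h})\otimes\mathcal{U}(\mathfrak{n}^+)$ to write each $z\in\mathcal{Z}$ as
\[
z = p(z) + r, \qquad p(z)\in\mathcal{U}(\mathfrak{h})\cong S(\mathfrak{h})=\mathbb{C}[\mathfrak{h}^*],
\]
where $r$ is a sum of PBW monomials each of which either ends in a nontrivial element of $\mathfrak{n}^+$ (and therefore annihilates $v_+$) or begins with a nontrivial element of $\mathfrak{n}^-$ (and therefore sends $v_+$ into a weight space strictly below $\mu$). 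Centrality of $z$ forces $zv_+$ to stay in the weight-$\mu$ line $\mathbb{C}v_+$, so the lower-weight contribution of $rv_+$ must vanish. Consequently $zv_+ = p(z)v_+ = \mu(p(z))v_+$, where $\mu(\cdot)$ denotes evaluation at $\mu$ of a polynomial function on $\mathfrak{h}^*$.

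Finally I would compare this with the Harish-Chandra isomorphism stated in the preceding theorem. In its standard normalization, $\xi_\lambda(z) = p(z)(\lambda-\rho)$: the $-\rho$ shift is precisely what converts the map $z\mapsto p(z)|_{\mathcal{Z}}$, which is $W$-equivariant only for the dot action $w\cdot\lambda = w(\lambda+\rho)-\rho$, into a map invariant for the ordinary $W$-action, so that it descends to the quotient $\mathfrak{h}^*/W$. Setting $\lambda=\mu+\rho$ then gives $\xi_{\mu+\rho}(z)=\mu(p(z))$, which matches the scalar computed above, proving that $\phi$ has infinitesimal character $\xi_{\mu+\rho}$.

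The main obstacle, such as it is, is bookkeeping the $\rho$-shift implicit in the Harish-Chandra isomorphism and verifying that the PBW projection $p$ agrees with the one used to define $\xi_\lambda$; the remaining content is a direct combination of the PBW theorem with Schur's lemma.
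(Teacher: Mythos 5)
Your argument is correct: computing the action of $z\in\mathcal{Z}$ on a highest weight vector via the PBW decomposition and then absorbing the $\rho$-shift built into the normalization of the Harish-Chandra isomorphism is the standard proof of this fact. The paper states this lemma without proof, as standard background on infinitesimal characters, so there is no alternative argument in the paper to compare against.
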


If $P=LN$ is a parabolic subgroup, then we may choose $\mathfrak{h}$ so that
$\mathfrak{h\subset l}$; then $\mathfrak{h}$ is a Cartan subalgebra of
$\mathfrak{l}$ as well.

\begin{lemma}\label{InfCharInd}
If $\pi\in Rep\left(  L\right)  $ has infinitesimal character $\mu
\in\mathfrak{h}^{\ast}$, then $I\left(  P,\pi\right)  $ has infinitesimal
character $\mu$ also.
\end{lemma}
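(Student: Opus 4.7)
The plan is to use the Harish-Chandra homomorphism adapted to the parabolic $P = LN$. There is an algebra map $\gamma_P : \mathcal{Z}(\mathfrak{g}) \to \mathcal{Z}(\mathfrak{l})$ such that the action of $z \in \mathcal{Z}(\mathfrak{g})$ on $I(P,\pi)$ via right-differentiation agrees with the action of $\gamma_P(z) \in \mathcal{Z}(\mathfrak{l})$ coming from $\pi$, and, under the parametrizations of the preceding Theorem (which are $\rho$-shifted on both sides), $\gamma_P$ becomes the natural inclusion $S(\mathfrak{h})^{W(\mathfrak{g})} \hookrightarrow S(\mathfrak{h})^{W(\mathfrak{l})}$. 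Granted this, if $\pi$ has infinitesimal character $\xi_\mu$ on $\mathfrak{l}$, then $z$ acts on $I(P,\pi)$ by $\xi_\mu(\gamma_P(z)) = \xi_\mu(z)$, which is the content of the lemma.

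First I would verify that $\mathcal{Z}(\mathfrak{g})$ does act on $I(P,\pi)$ by endomorphisms: since its elements are bi-invariant in $\mathcal{U}(\mathfrak{g})$, right differentiation by $z$ commutes both with the left-equivariance condition defining $C^\infty(G,P,\chi)$ and with the right $G$-action. Next I would evaluate $z$ on $f \in I(P,\pi)$ at the identity. Using the PBW decomposition $\mathcal{U}(\mathfrak{g}) = \mathcal{U}(\mathfrak{l}) \oplus (\bar{\mathfrak{n}}\,\mathcal{U}(\mathfrak{g}) + \mathcal{U}(\mathfrak{g})\,\mathfrak{n})$ associated with $P = LN$ and the opposite parabolic $\bar P = L\bar N$, one writes $z = z_L + r$ with $r$ in the second summand; moving factors of $\mathfrak{n}$ to the right (they act on $f$ at $1$ by scalars coming from $L_{p^{-1}}f = \chi(p)\Delta_P^{1/2}(p)f$) and factors of $\bar{\mathfrak{n}}$ to the left, one reduces $(zf)(1)$ to $(\gamma_P(z)\cdot f)(1)$, the right-hand side being the action of an element of $\mathcal{Z}(\mathfrak{l})$ on the value of $f$ at $1$, which takes values in $\pi$.

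The main obstacle is the bookkeeping of the two $\rho$-shifts---the one in Harish-Chandra's isomorphism $\mu \mapsto \xi_\mu$ and the one hidden in the normalization $\Delta_P^{1/2}$ of parabolic induction---which must exactly cancel so that the parameter $\mu$ attached to $\pi$ maps to the same $\mu$ attached to $I(P,\pi)$ without a further twist. This is a classical computation (see e.g.~\cite{Wal1}) and is precisely the reason why the definition \eqref{=CG} of induction includes the factor $\Delta_P^{1/2}$ rather than no twist at all.
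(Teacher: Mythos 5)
The paper itself gives no proof of this lemma: it is recalled in \S\ref{sec:InfChar} as standard background (the preliminaries defer detailed proofs to \cite{BW,Wal1}), so there is no in-paper argument to compare against. Your sketch is the classical argument and its overall structure is correct: $\mathcal{Z}(\mathfrak{g})$ acts on $I(P,\pi)$ by the derived right action, this action factors through a relative Harish-Chandra homomorphism $\gamma_P\colon\mathcal{Z}(\mathfrak{g})\to\mathcal{Z}(\mathfrak{l})$, and under the $\rho$-shifted parametrizations $\gamma_P$ corresponds to the inclusion $S(\mathfrak{h})^{W(\mathfrak{g},\mathfrak{h})}\hookrightarrow S(\mathfrak{h})^{W(\mathfrak{l},\mathfrak{h})}$, so a representation $\pi$ with infinitesimal character $\mu$ induces to one with infinitesimal character $\mu$; the factor $\Delta_P^{1/2}$ in \eqref{=CG} is exactly what absorbs the discrepancy $\rho_{\mathfrak{g}}-\rho_{\mathfrak{l}}=\rho(\mathfrak{n})$.

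One concrete point needs fixing, because as written the reduction step does not match the paper's conventions. In \eqref{=CG} the functions are \emph{left}-equivariant under $P$ (that is, $f(px)=\chi(p)\Delta_P^{1/2}(p)f(x)$) and $G$ acts on the \emph{right}; hence in $(u\cdot f)(1)$, $u\in\mathcal{U}(\mathfrak{g})$, it is the \emph{leftmost} factors of a PBW monomial that are absorbed by the equivariance at the identity, not the rightmost ones. So one should use the decomposition $\mathcal{U}(\mathfrak{g})=\mathcal{U}(\mathfrak{l})\oplus\bigl(\mathfrak{n}\,\mathcal{U}(\mathfrak{g})+\mathcal{U}(\mathfrak{g})\,\bar{\mathfrak{n}}\bigr)$ rather than the one you wrote: for $z$ central (indeed for any $\mathfrak{l}$-invariant $z$) the residual term can be taken in $\mathfrak{n}\,\mathcal{U}(\mathfrak{g})\cap\mathcal{U}(\mathfrak{g})\,\bar{\mathfrak{n}}$, and such terms give $0$ at the identity (not merely a scalar), since $\chi$, $\Delta_P$ and $\pi$ extended to $P$ are trivial on $N$, so $d(\chi\Delta_P^{1/2})$ vanishes on $\mathfrak{n}$. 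With your orientation, a rightmost factor $X\in\mathfrak{n}$ acts \emph{first}, and $X\cdot f$ at a general point is not controlled by the left-equivariance, so that step fails literally; it is the correct choice only for the opposite convention (right $P$-equivariance and left $G$-action). The slip does not affect the final statement, since the normalized relative Harish-Chandra maps attached to $P$ and $\bar P$ coincide, but the $\rho$-shift bookkeeping you flag as the delicate point only cancels cleanly when the decomposition matches the paper's normalization. Two further minor points: evaluation at $1$ suffices because $z$ is $\mathrm{Ad}$-invariant, hence its left-invariant operator also commutes with right translations, giving $(zf)(g)=(z(R_gf))(1)$; and since the lemma concerns general $\pi\in Rep(L)$, the computation should be run with vector-valued $f$, the $\mathcal{U}(\mathfrak{l})$-part acting on $f(1)\in V_\pi$ through $d\pi+\tfrac12\,d\log\Delta_P$.
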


\section{Finite-dimensional subquotients of $\chi \times  1$}

\label{sec:GL}

%Then, in \S \ref{subsec:der}

% \subsection{Derivatives}
%
% \label{subsec:der}
%
% %In this subsection we have no assumption on the local field $F$.
%
%
% Let $\chi_{i}^{\prime}$ denote the restriction of $\chi_{i}$ to $G_{p_{i}-1}$
% and write $\mathfrak{X}^{\prime}:=\{\chi_{1}^{\prime},\chi_{2}^{\prime}%
% ,\chi_{3}^{\prime},\chi_{4}^{\prime}\}$. We will use the functor $\Phi$ from
% \cite{BZ-Induced, AGS} (in \cite{BZ-Induced} it is denoted by $\Phi^{-}$). It
% is shown in \cite{BZ-Induced} for non-archimedean $F$ and in \cite{AGS} for
% archimedean $F$ that $\Phi$ is exact, that it maps an admissible
% representation of $G_{n}$ of depth 2 to an admissible \emph{non-zero
% }representation of $G_{n-2}$ and an admissible representation of depth 1 to
% \emph{zero}. It is also shown that a product of characters $\chi_{i}\times
% \chi_{j}$ has depth 2, any subquotient of it has depth $\leq2$ and that a
% representation has depth 1 if and only if it is finite dimensional. Also is
% also shown that $\Phi(\chi_{1}\times\chi_{2})=\chi_{1}^{\prime}\times\chi
% _{2}^{\prime}$. For the definition of depth and of $\Phi$, and for all the
% above facts we refer the reader to \cite{BZ-Induced} and \cite[\S 3]{AGS}.
% These facts imply
%
% \begin{lem}
% \label{lem:der} Let $0\neq\varphi\in H(\mathfrak{X})$ Then either
%
% \begin{itemize}
% \item $\varphi$ factors through a finite-dimensional representation, or
%
%\item $p_{1},p_{2},p_{3},p_{4}>0$ and $0\neq\Phi(\varphi)\in H(\mathfrak{X}%
%^{\prime}) $.
%\end{itemize}
%\end{lem}

\subsection{Archimedean case}

\label{subsec:arch}

%\begin{proposition}\label{prop:UniSubQuot}
%$\chi_{1}\times\chi_{2}$ has a unique \emph{irreducible} submodule and %quotient.
%\end{proposition}

%\begin{remark}
%\label{rem:padicUniSubQuot} Proposition \ref{prop:UniSubQuot} generalizes to
%the case of non-archimedean $F$, as follows from the Zelevinsky classification
%\cite{Zel}.
%\end{remark}

%For the proof the archimedean case we will need the following lemma
%In the archimedean case this was proven in \cite{HL} and in the non-archimedean case this follows from the Zelevinsky classification and the next lemma.
%\begin{lem}
%\label{lem:padicFinDim} Assume that $F$ is non-archimedean. Then $\chi_{3} \hookrightarrow\chi_{1}\times\chi_{2}$ iff
%$\psi_{3}=\nu^{p_{2}/2}\psi_{1}=\nu^{-p_{1}/2}\psi_{2}$ and $\chi_{1}%
%\times\chi_{2} \twoheadrightarrow\chi_{3}$ iff $\psi_{3}=\nu^{-p_{2}/2}%
%\psi_{1}=\nu^{p_{1}/2}\psi_{2}$.
%\end{lem}
%\begin{proof}
%Let $P:=P_{p_{1},p_{2}},$ $U$ be the unipotent radical of $P$ and let $r_{P}$
%denote the corresponding functor of $U$-coinvariants, which is known to be
%left-adjoint to parabolic induction. Then
%\[
%\chi_{3} \hookrightarrow\chi_{1}\times\chi_{2} \Leftrightarrow r_{P}(\chi
%_{3})=\chi_{1}\boxtimes\chi_{2} \Leftrightarrow\psi_{3}=\nu^{-p_{2}/2}\psi
%_{1}=\nu^{p_{1}/2}\psi_{2}%
%\]
%The second statement follows by taking contragredient representations.
%\end{proof}

%\subsection{Infinitesimal characters}\label{sec:InfChar}

Consider first the case $F=\mathbb{R}$. By the Harish-Chandra isomorphism, the
characters of $\mathcal{Z}\left(  G_{n}\right)  $ can be identified with
$\mathbb{C}^{n}/S_{n}$ where $S_{n}$ is the symmetric group. Thus
infinitesimal characters can be thought of as \emph{multisets} of complex
numbers.
%Now the following result is obvious.
%\begin{lemma}
%\label{lem:ch} If $H(\mathfrak{X})\neq0$ then $\xi_{1}\sqcup\xi_{2}=\xi
%_{3}\sqcup\xi_{4}$.
%\end{lemma}

\begin{defn}
\label{def:seg} Let $\xi=\{z_{1},\dots,z_{n}\}$ be a multiset of complex
numbers with $\operatorname{Re}z_{i}\geq\operatorname{Re}z_{i+1}$. We say that
$\xi$ is a \emph{segment} if $z_{i}-z_{i+1}=1$ for all $i$, and a
\emph{generalized segment} if $z_{i}-z_{i+1}$ is an integer for all $i$.
For $s\in \C, \, p\in \N$ we denote the segment of length $p$ with center at $s$ by $\xi_{p}^s$.

\end{defn}

It is well-known that the infinitesimal character of a character $\chi=\eps^k\nu^s$ of $GL_p(\R)$ is $\xi_{p}^s$, and that the infinitesimal character
of an irreducible finite-dimensional
representation is a generalized segment. From Lemma \ref{InfCharInd} we obtain that the infinitesimal character of $\chi \times 1$ is the disjoint (multiset) union $\xi_{p_1}^0\sqcup\xi_{p_2}^s$.

% \begin{lemma}
% \label{lem:RealChar} Suppose $F=\mathbb{R}$. Then
%
% \begin{enumerate}
% [(i)]
%
% \item \label{it:ch} If $H(\mathfrak{X})\neq0$ then $\xi_{1}\sqcup\xi_{2}%
% =\xi_{3}\sqcup\xi_{4}$.
%
% \item \label{it:FinDimCh} If $\chi_{i}\times\chi_{j}$ has a one-dimensional
% (resp. finite-dimensional) subquotient then $\xi_{i}\sqcup\xi_{j}$ is a
% segment (resp. generalized segment).
% \end{enumerate}
% \end{lemma}

For a representation of $GL_{n}(\mathbb{C})$ the infinitesimal character is a
pair of multisets $(\eta,\zeta),$ the infinitesimal character of $\chi=\alpha^k\nu^s$ is $(\xi_{p}^{s+k},\xi_{p}^{s-k})$, and the infinitesimal character of $\chi
\times1$ is
%\begin{equation}\label{=ccc}
$(\xi_{p_1}^{s+k}\sqcup\xi_{p_2}^{0},\xi_{p_2}^{s-k}\sqcup\xi_{p_2}^{0})$.
%\end{equation}

% \begin{lemma}
% \label{lem:ComplexChar} Suppose $F=\mathbb{C}$. Then
%
% \begin{enumerate}
% [(i)]
%
% \item \label{lem:ch} If $H(\mathfrak{X})\neq0$ then $\eta_{1}\sqcup\eta
% _{2}=\eta_{3}\sqcup\eta_{4}$ and $\zeta_{1}\sqcup\zeta_{2}=\zeta_{3}%
% \sqcup\zeta_{4}$.
%
% \item If $\chi_{i}\times\chi_{j}$ has a one-dimensional (resp.
% finite-dimensional) subquotient then both $\eta_{i}\sqcup\eta_{j}$ and
% $\zeta_{i}\sqcup\zeta_{j}$ are segments (resp. generalized segments).
% \end{enumerate}
% \end{lemma}

Let us now consider the $K$-types of $\chi\times1$, for
$F\in\{\mathbb{R},\mathbb{C}\}$. Here, $K$ is the maximal compact subgroup of
$G_{n}$ given by  $K=O(n)$ for $F=\mathbb{R}$ and $K=U(n)$ for $F=\mathbb{C}$. We
denote by $K^{0}$ the connected component of $K$, thus $K^{0}=SO(n)$ for
$F=\mathbb{R}$ and $K^{0}=K$ for $F=\mathbb{C}$.

\begin{lemma}
\label{lem:signs}

\begin{enumerate}
[(i)]

\item \label{it:trivKtype} If $\chi\times1$ has the trivial K-type
then $\chi$ is a power of $\nu$.

\item \label{it:mult} The restriction of $\chi\times 1$ to $K^{0}$
is multiplicity free

\item \label{it:signs} Suppose $F=\mathbb{R}$, $\chi=\eps^k\nu^s$ and let $\psi=\eps^l\nu^t$ be a character of $G_{p_1}$, such that $\chi\times 1$ and $\psi\times 1$ have a common $K$-type. Then $k=l \,\, (mod \,\,2)$.
\end{enumerate}
\end{lemma}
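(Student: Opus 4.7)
The plan is to apply Lemma~\ref{lem:mac} to reduce each of (i)--(iii) to a statement about compact-group induced representations, and then to use Frobenius reciprocity and Lemma~\ref{lem:SymGel}.

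For (i), Lemma~\ref{lem:mac} gives $(\chi\times 1)|_{K}\cong \mathrm{Ind}_{P\cap K}^{K}(\chi|_{P\cap K})$. By Frobenius reciprocity the trivial $K$-type appears if and only if $\chi|_{P\cap K}$ is the trivial character. For $F=\R$ we have $P\cap K=O(p_1)\times O(p_2)$, and $\nu|_{O(p_1)}=1$; hence $\chi|_{P\cap K}$ is trivial iff $\eps^{k}|_{O(p_1)}=1$ iff $k$ is even, in which case $\chi=\nu^{s}$ is a power of $\nu$. The case $F=\C$ is parallel with $U(p_1)\times U(p_2)$ and $\alpha|_{U(p_1)}=\det$.

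For (ii), I pass to the connected maximal compact $K^{0}$ and again apply Lemma~\ref{lem:mac} to obtain $(\chi\times 1)|_{K^{0}}\cong \mathrm{Ind}_{P\cap K^{0}}^{K^{0}}(\chi|_{P\cap K^{0}})$. The key observation is that $P\cap K^{0}$ is a symmetric subgroup of $K^{0}$: conjugation by $\mathrm{diag}(I_{p_1},-I_{p_2})$ is an involution of $K^{0}$ whose fixed subgroup is $U(p_1)\times U(p_2)$ for $F=\C$ and $S(O(p_1)\times O(p_2))$ for $F=\R$. Multiplicity-freeness then follows from Lemma~\ref{lem:SymGel}.

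For (iii), let $V$ be a common $K$-type. By Frobenius reciprocity applied to Lemma~\ref{lem:mac}, $V^{*}$ contains nonzero lines on which $P\cap K$ acts by $\chi|_{P\cap K}^{-1}=(\mathrm{sgn}^{k},1)^{-1}$ and by $\psi|_{P\cap K}^{-1}=(\mathrm{sgn}^{l},1)^{-1}$ respectively. The element $-I_{n}=(-I_{p_1},-I_{p_2})$ lies simultaneously in $Z(K)$ and in $P\cap K$, so it acts by a single scalar on $V$; this scalar equals both $(-1)^{p_1 k}$ and $(-1)^{p_1 l}$, forcing $p_1(k-l)\equiv 0\pmod 2$. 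For $p_1$ odd this already yields $k\equiv l\pmod 2$. For $p_1$ even I descend to $K^{0}$-types using (ii): the two restrictions $\chi|_{P\cap K^{0}}$ and $\psi|_{P\cap K^{0}}$ are the trivial and the unique nontrivial character of the component group $P\cap K^{0}/(SO(p_1)\times SO(p_2))\cong \Z/2\Z$, and by the multiplicity-one and Cartan--Helgason-type dichotomy for the symmetric pair $(K^{0},P\cap K^{0})$ an irreducible $K^{0}$-representation is spherical for at most one of the two.

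The main obstacle is the even-$p_1$ sub-case of (iii): here the central character of $GL(n,\R)$ is insensitive to $k\pmod 2$, so the crude $-I_n$-argument fails and one genuinely needs the finer structure of the compact symmetric pair to separate the two possible $(P\cap K^{0})$-characters. The rest of the proof is a straightforward mixture of Lemma~\ref{lem:mac}, Lemma~\ref{lem:SymGel}, and highest-weight bookkeeping.
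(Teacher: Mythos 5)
Your proposal is correct and follows essentially the paper's route: Lemma~\ref{lem:mac} plus Frobenius reciprocity for (i), Lemma~\ref{lem:SymGel} for the symmetric pair $(K^0, K^0\cap P)$ for (ii), and for (iii) the fact that no irreducible $K^0$-representation can carry both characters of $(K^0\cap P)/(K^0\cap P)^0$. The only real difference is in (iii): the paper does not split into the cases $p_1$ odd/even, but argues uniformly that $Ind_{M}^{K^0}(1)\oplus Ind_{M}^{K^0}(\eps\otimes 1)=Ind_{M^0}^{K^0}(1)$ with $M=K^0\cap P$, which is multiplicity free by Lemma~\ref{lem:SymGel} applied to the connected symmetric subgroup $M^0$, so the two summands share no constituent; your ``Cartan--Helgason-type dichotomy'' is exactly this statement, so to keep the argument inside the paper's toolkit you should justify it by Lemma~\ref{lem:SymGel} for $M^0$ (whose proof only uses $\mathfrak{k}=\mathfrak{m}+\mathfrak{b}$ and hence applies to the identity component), at which point your separate central-character argument for $p_1$ odd becomes unnecessary.
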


\begin{proof}
Let $P:=P_{p_{1},p_{2}}, M:=K^{0}\cap P$ and
$M^{0}$ be the connected component of $M$. Consider the involution on $K^0$ given by conjugation by the block-diagonal matrix $\diag (Id_{p_1} , -Id_{p_2})$. The fixed points group of this involution is $M$. Thus both $M$ and $M^0$ are symmetric subgroups of $K^0$.

Let $\chi_K:=(\chi
\otimes1)|_{P\cap K}$.
By Lemma \ref{lem:mac} we have $(\chi\times 1)|_{K}=Ind_{P\cap K}^K(\chi_K)$
and $(\chi\times 1)|_{K^{0}}=Ind_{M}^{K^0}(\chi_K|_M)$.

(\ref{it:trivKtype}) follows now from Lemma \ref{lem:FinSubFrob} and
(\ref{it:mult}) from Lemma \ref{lem:SymGel}.

For (\ref{it:signs}) note that if $k$ and
$l$ have different parities then one of the
representations $(\chi\times 1)|_{K^{0}}$ and $(\psi\times
1)|_{K^{0}}$ is $Ind_M^{K^0}(1)$ and the other is $Ind_M^{K^0}(\eps\otimes 1)$. Thus, $(\chi\times 1)|_{K^{0}}\oplus(\psi
\times 1)|_{K^{0}}=Ind_{M^0}^{K^0}(1) $ is multiplicity free by Lemma
\ref{lem:SymGel}, and thus $\chi\times 1$ and $\psi\times 1$ cannot have a common $K$-type.
\end{proof}

\begin{cor}
\label{cor:UniqFinDimSQ} $\chi\times 1$ has at most one irreducible
finite-dimensional subquotient.
%$\phi$, occuring with multiplicity one.

\end{cor}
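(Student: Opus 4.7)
The plan is to pin down the two irreducible finite-dimensional subquotients via their infinitesimal characters and then, in the real case, refine the argument using $K^0$-types.

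First I would note that since $\chi\otimes 1$ is a character of the Levi $L$, it has an infinitesimal character, so Lemma \ref{InfCharInd} gives $\chi\times 1$ an infinitesimal character; consequently every irreducible subquotient has that same infinitesimal character. For any irreducible finite-dimensional representation $\phi$ with highest weight $\mu$, the Harish-Chandra theorem and the lemma preceding \ref{InfCharInd} identify its infinitesimal character with $\mu+\rho\in \mathfrak{h}^{*}/W$. Since $\mu+\rho$ is strictly dominant, it is the unique dominant representative of its $W$-orbit, so $\mu$ is determined by the infinitesimal character. Hence any two irreducible finite-dimensional subquotients $\phi_{1},\phi_{2}$ of $\chi\times 1$ share the same highest weight.

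In the case $F=\mathbb{C}$ this already finishes the argument, since $GL_{n}(\mathbb{C})$ is connected and an irreducible finite-dimensional representation is determined by its highest weight, so $\phi_{1}\cong\phi_{2}$.

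In the case $F=\mathbb{R}$ the group $GL_{n}(\mathbb{R})$ has two connected components, and two irreducible finite-dimensional representations with the same highest weight either coincide or differ by a twist by the sign character $\eps$. Thus it remains to exclude the possibility $\phi_{2}\cong\phi_{1}\otimes\eps$ with $\phi_{1}\not\cong\phi_{2}$. Here I would use that $\eps$ restricts trivially to $K^{0}=SO(n)$, so $\phi_{1}|_{K^{0}}\cong\phi_{2}|_{K^{0}}$; but then every $K^{0}$-type appearing in $\phi_{1}$ would occur in $(\chi\times 1)|_{K^{0}}$ with multiplicity at least two, contradicting Lemma \ref{lem:signs}(\ref{it:mult}). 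The main obstacle is precisely this real-case step: ruling out the $\eps$-twist is purely a $K^{0}$-type phenomenon and relies on the multiplicity-one structure coming from $P\cap K$ being a symmetric subgroup, rather than on any highest weight considerations.
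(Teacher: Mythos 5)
Your argument is built from the same two ingredients as the paper's (a single infinitesimal character for $\chi\times 1$, plus the multiplicity-free $K^0$-restriction of Lemma \ref{lem:signs}(\ref{it:mult})), only routed through highest weights of $G_n$ and Clifford theory for the component group rather than through irreducible representations of $G_n^0$ as the paper does. The infinitesimal-character step is fine: $\mu+\rho$ is regular dominant, so it determines $\mu$, and your dichotomy for $F=\R$ (equal or $\eps$-twisted) is correct, as is the exclusion of the $\eps$-twist via $K^0$-types.

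However, there is a gap in what you actually conclude: you only show that any two irreducible finite-dimensional subquotients are \emph{isomorphic}, not that at most one finite-dimensional constituent occurs in the Jordan--H\"older series counted with multiplicity. In the complex case you stop at $\phi_1\cong\phi_2$, and in the real case you treat the case $\phi_1\cong\phi_2$ as already settled; in neither case do you rule out the same finite-dimensional representation appearing twice as a constituent. This stronger statement is what the corollary is used for: in the proof of Theorem \ref{thm:mon} the finite-dimensional constituent is the unique constituent of rank $0$, and a repeated copy would break the strict monotonicity of ranks (and the identification $\phi'=\phi$ in Lemma \ref{lem:AppFinDim}). The paper's proof closes this explicitly (``it appears only once'') by the same Lemma \ref{lem:signs}(\ref{it:mult}) you invoke: if an irreducible finite-dimensional $\phi$ occurred twice among the constituents, then, since $K^0$-multiplicities add up along a Jordan--H\"older filtration, every $K^0$-type of $\phi$ would occur in $(\chi\times 1)|_{K^0}$ with multiplicity at least two, contradicting multiplicity-freeness. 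Adding this one sentence (in both the real and complex cases) completes your proof.
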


\begin{proof}
Different irreducible finite-dimensional representations of the connected
component $G_{n}^{0}$ of $G_{n}$ have different infinitesimal characters. Thus
only one irreducible finite-dimensional representation of $G_{n}^{0}$ can
appear as a subquotient. It appears only once by Lemma \ref{lem:signs}%
(\ref{it:mult}).
\end{proof}

\subsection{Non-archimedean case}

\label{subsec:non-arch}

In this subsection we assume that $F$ is non-archimedean. Thus every
irreducible finite-dimensional representation of $G_{n}$ is a character. %The maximal compact subgroup is $K=GL(n,\mathcal{O})$
% where $\mathcal{O}=\{x \in F \text{ s.t. } \vert x \vert\leq1\}$ is the ring
% of integers of $F$, and we have $G_{n}=KP$ for any parabolic subgroup
%$P\subset G_{n}$.

We need the next lemma, which follows from \cite[Theorem
2.9]{BZ-Induced}.

\begin{lem}
\label{lem:PrinJH} Let $B_{n}\subset G_{n}$ be the usual Borel subgroup,
$\phi_{1}$ and $\phi_{2}$ be characters of $B_{n}$ given by $n$-tuples  $X_{1}
\text{ and } X_{2}$ of characters of $F^{\times}$. Suppose that the principal
series $I(B_{n},\phi_{i})$ have a common constituent. Then $X_{1}$ is a
permutation of $X_{2}$.
\end{lem}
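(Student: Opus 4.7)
The plan is to apply the normalized Jacquet functor $J=J_{B_n}$, which sends finite-length smooth $G_n$-modules to finite-length smooth $T_n$-modules, where $T_n\simeq(F^\times)^n$ is the diagonal torus. By the cited Theorem~2.9 of \cite{BZ-Induced} (the geometric lemma), for any character $\phi$ of $B_n$ determined by an $n$-tuple $X=(X_1,\dots,X_n)$ of characters of $F^\times$, the $T_n$-module $J(I(B_n,\phi))$ has composition length $n!$, and its irreducible constituents are precisely the characters $\{w\cdot\phi:w\in W\}$, where $W=S_n$ acts by permuting the entries of $X$.

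Let $\pi$ be a common irreducible constituent of $I(B_n,\phi_1)$ and $I(B_n,\phi_2)$. Exactness of $J$ implies that $J(\pi)$ is a subquotient of both $J(I(B_n,\phi_i))$; in particular, any irreducible $T_n$-constituent of $J(\pi)$ must appear in both finite multisets $\{w\cdot\phi_1\}_{w\in W}$ and $\{w\cdot\phi_2\}_{w\in W}$. Hence there exist $w_1,w_2\in W$ with $w_1\cdot\phi_1=w_2\cdot\phi_2$, which is precisely the statement that $X_1$ is a permutation of $X_2$.

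The main (and essentially only) obstacle is verifying that $J(\pi)\neq 0$. The case $n=1$ is trivial, since then $B_1=G_1$ and both principal series are the one-dimensional characters $\phi_1$ and $\phi_2$ themselves. For $n\geq 2$, $\pi$ is by hypothesis an irreducible subquotient of a parabolic induction from the proper parabolic $B_n$; by the uniqueness of cuspidal support in Bernstein-Zelevinsky theory such a $\pi$ cannot be cuspidal, so its Jacquet module with respect to some proper parabolic is nonzero. Since the cuspidal support of any subquotient of $I(B_n,\phi)$ is carried by the minimal Levi $T_n$, it follows that $J(\pi)=J_{B_n}(\pi)$ itself is nonzero, completing the argument.
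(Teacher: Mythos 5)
Your argument is correct and is essentially the paper's: the paper proves the lemma only by citing \cite[Theorem 2.9]{BZ-Induced} (the Bernstein--Zelevinsky theory of Jacquet modules and cuspidal support for principal series), and your proof simply unpacks that citation --- the computation of the semisimplification of $J(I(B_n,\phi))$ as $\{w\cdot\phi\}_{w\in S_n}$ plus the nonvanishing of $J(\pi)$ for any irreducible constituent $\pi$, whose cuspidal support lies on the torus. The only small remark is bibliographic: the explicit Jacquet-module computation you use is the geometric lemma (2.12 in \cite{BZ-Induced}), while the cuspidal-support facts you invoke for $J(\pi)\neq 0$ are precisely the content of the cited Theorem 2.9, so your write-up is a correct expansion of the same route rather than a genuinely different one.
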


\begin{cor}
\label{cor:pre1dimpadic} If $\chi\times 1$ has a one-dimensional
subquotient $\psi$ then $\psi$ is unique and either $\chi=\nu^{-n/2}, \, \psi=\nu^{-p_2/2}$ and $\psi$ is a submodule or $\chi=\nu^{n/2}, \, \psi=\nu^{p_1/2}$ and $\psi$ is a quotient.
\end{cor}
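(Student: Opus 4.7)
Write $\chi=\chi_0\circ\det$ and $\psi=\psi_0\circ\det$ for characters $\chi_0,\psi_0:F^\times\to\C^\times$. The strategy combines Lemma~\ref{lem:PrinJH} with Corollary~\ref{cor:NAFinSub} and its dual via Lemma~\ref{lem:duality}. First, by induction in stages together with the exactness of parabolic induction, realize $\chi\times 1$ as a submodule of a principal series $I(B_n,\phi_\chi)$ whose tuple of characters of $F^\times$ is the concatenation
\[
X_\chi=\bigl(\chi_0\nu^{(p_1-1)/2},\ldots,\chi_0\nu^{-(p_1-1)/2},\;\nu^{(p_2-1)/2},\ldots,\nu^{-(p_2-1)/2}\bigr).
\]
This uses the standard fact (verified by a direct Frobenius-reciprocity computation in the style of Corollary~\ref{cor:NAFinSub}) that the character $\chi$ of $G_{p_1}$ embeds into the principal series of $B_{p_1}$ with segmental tuple centered at $\chi_0$, and similarly for the trivial character of $G_{p_2}$. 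Analogously, $\psi$ is a constituent of the principal series $I(B_n,\phi_\psi)$ whose tuple is
\[
X_\psi=\bigl(\psi_0\nu^{(n-1)/2},\ldots,\psi_0\nu^{-(n-1)/2}\bigr).
\]

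Since $\psi$ is a subquotient of $\chi\times 1$ and hence of $I(B_n,\phi_\chi)$, it is a common constituent of $I(B_n,\phi_\chi)$ and $I(B_n,\phi_\psi)$. Lemma~\ref{lem:PrinJH} forces $X_\chi$ and $X_\psi$ to coincide as multisets. Now $X_\psi$ is a single segment (an arithmetic progression in $\nu$ of length $n$) centered at $\psi_0$, whereas $X_\chi$ is the disjoint union of two such segments, of lengths $p_1$ and $p_2$, centered at $\chi_0$ and at the trivial character respectively. For the multisets to match, $\chi_0$ must be a power of $\nu$, and the two sub-segments must be disjoint and adjacent. A direct computation yields exactly two possibilities: $\chi_0=\nu^{-n/2}$ with $\psi_0=\nu^{-p_1/2}$, or $\chi_0=\nu^{n/2}$ with $\psi_0=\nu^{p_1/2}$.

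In the first case, Corollary~\ref{cor:NAFinSub} directly gives $\psi\hookrightarrow\chi\times 1$, so $\psi$ is a submodule. In the second case, by Lemma~\ref{lem:duality} the contragredient of $\chi\times 1$ is $\chi^{-1}\times 1=\nu^{-n/2}\times 1$; applying Corollary~\ref{cor:NAFinSub} to the character $\psi^{-1}=\nu^{-p_1/2}$ inside $\chi^{-1}\times 1$ yields $\psi^{-1}\hookrightarrow(\chi\times 1)^{\vee}$, and dualizing shows $\psi$ is a quotient of $\chi\times 1$. Uniqueness of $\psi$ as a character is immediate from the uniqueness in the tuple match.

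\textbf{Main obstacle.} The crux is the combinatorial tuple-matching: that two segments of lengths $p_1,p_2$, centered at $\chi_0$ and at the trivial character, combine as multisets into a single length-$n$ segment precisely when $\chi_0=\nu^{\pm n/2}$ with the correct orientation. This is elementary but requires careful bookkeeping. A secondary care point is correctly setting up the initial embedding of $\chi\times 1$ into $I(B_n,\phi_\chi)$ via induction in stages, which relies on the fact that any character of $G_k$ is a submodule of a principal series with the appropriate segmental tuple—even when $\chi_0$ is ramified.
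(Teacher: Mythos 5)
Your route is the same as the paper's: realize $\chi$, $1$ and $\psi$ inside principal series of $B_{p_1}$, $B_{p_2}$, $B_n$, use induction in stages to see $\psi$ as a common constituent of two full principal series, invoke Lemma \ref{lem:PrinJH} to match the two character tuples as multisets, and then settle the submodule/quotient dichotomy by Corollary \ref{cor:NAFinSub} together with Lemma \ref{lem:duality}. Your tuple-matching computation is correct; note that your answer $\psi=\nu^{-p_1/2}$ in the case $\chi=\nu^{-n/2}$ is the one consistent with Corollary \ref{cor:NAFinSub} (the $\nu^{-p_2/2}$ appearing in the statement, and the $p_1$/$p_2$ mismatch inside the paper's own proof, are typos: the two cases must be exchanged under the contragredient, so the exponents are $\mp p_1/2$).

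There is, however, one genuine gap: your uniqueness argument only shows that the \emph{isomorphism class} of $\psi$ is determined by $\chi$ (since the two cases $\chi=\nu^{\pm n/2}$ are mutually exclusive), not that a one-dimensional constituent occurs with multiplicity one in the Jordan--H\"older series of $\chi\times 1$. Lemma \ref{lem:PrinJH} gives no control on multiplicities, so "uniqueness of the tuple match" cannot rule out $\psi$ appearing twice as a subquotient; and multiplicity one is exactly what is needed later, in the proof of Lemma \ref{lem:padic}, to conclude that $\operatorname{length}(\nu^{\pm n/2}\times 1)=2$. The paper closes this by a $K$-type count: by Lemma \ref{lem:mac} the space of $GL(n,\mathcal{O})$-fixed vectors in $\chi\times 1$ is at most one-dimensional, while each one-dimensional subquotient $\nu^{\pm p_1/2}$ is unramified and hence contributes a $K$-fixed vector (and taking $K$-invariants is exact on smooth representations), so such a subquotient can occur at most once. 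Adding this observation completes your argument.
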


\begin{proof}
Define $a_1,a_2,a_3$ to be characters of $F^\times$ such that
$\chi=a_1\circ\det$, $\psi=a_{3}\circ\det$ and $a_2=1$.  Let $p_{3}:=n$ and
\[
Y_{i}:=(a_{i}\nu^{-(p_{i}-1)/2},a_{i}\nu^{-(p_{1}-3)/2},\dots, a
_{i}\nu^{(p_{i}-1)/2}), X_{1}:=Y_{3} \text{ and }X_{2}:=(Y_{1},Y_{2}).
\]
Then $\chi \hookrightarrow I(B_{p_{1}},Y_{1})$, $1 \hookrightarrow I(B_{p_{2}},Y_{2})$ and $\psi \hookrightarrow I(B_{p_{3}},Y_{3})$. Thus $\chi\times
1 \hookrightarrow I(B_{n},X_{2})$ and thus $\psi$ is a joint
subquotient of $I(B_{n},X_{1})$ and $I(B_{n},X_{2})$. Thus $X_{1}$ is a
permutation of $X_{2}$, which implies that either $a_{3}=\nu^{-p_{2}/2}$ and $a_{1}=\nu^{-n/2}$ or $a_{3}=\nu^{p_{2}/2}$ and $a_{1}=\nu
^{n/2}$. In the first case we get $\chi=\nu^{-n/2}, \, \psi=\nu^{-p_2/2}$ and, by Corollary \ref{cor:NAFinSub} $\psi$ is a submodule, and in the second case $\chi=\nu^{n/2}, \, \psi=\nu^{p_1/2}$ and $\psi$ is a quotient.

The uniqueness of $\psi$ follows from the uniqueness of the $K$-fixed vector (Lemma \ref{lem:mac}).\end{proof}

%These lemmas imply

\section{Proof of the main results}\label{sec:Pf}

Denote $\pi:=\chi \times 1$, $r:=\min(p_{1},p_{2})$.

\begin{proof}[Proof of Theorem \ref{thm:irr}]
We prove the statement by induction on $r$. For the base of
the induction, note that if $r=1$ then $\Phi(\pi)$ is
1-dimensional and thus, by Proposition \ref{prop:der}, $\pi$ is reducible if and only if it has either a
finite-dimensional submodule or a finite-dimensional quotient. By Corollary \ref{cor:AFinSub} and Lemma \ref{lem:duality}, this is equivalent to the conditions
of the proposition.

Note that if one of the conditions holds for $\chi^{\prime}$ and $n-2$ then it holds for $\chi$ and $n$. Conversely, if one of
the conditions holds for $\chi$ (and $n$) then either it holds for
$\chi^{\prime}$ (and $n-2$) or $\pi=\chi\times 1$ has a
finite-dimensional submodule or quotient.

Now suppose $\pi$ is irreducible but one of the conditions
holds. Since we have standard intertwining operators between $\pi$ and
$1\times \chi$, this implies that $\pi$ is a direct
summand of $1\times\chi$. Applying $\Phi$ we get that $\Phi(\pi)=\chi
^{\prime}\times 1$ is a direct summand of $1 \times \chi^{\prime}$. By the induction hypothesis, $\chi^{\prime}\times 1$ is reducible. Applying $\Phi$ again and again if
needed, we can assume (Corollary \ref{cor:AFinSub}  and Lemma \ref{lem:duality}) that $\chi^{\prime}\times1$ has a
finite-dimensional submodule or a finite-dimensional quotient. Then Corollary \ref{cor:AFinSub} and Lemma \ref{lem:duality} imply that $1\times\chi^{\prime}$ cannot have a finite-dimensional constituent from the
same side, contradicting having $\chi^{\prime}\times 1 $ as
a direct summand.

Now, suppose $\pi$ is reducible. If it has more than one
infinite-dimensional constituent then $\chi^{\prime}\times 1$ is reducible and thus, by the induction hypothesis, one of the
conditions holds. Otherwise, $\pi$ has either a finite-dimensional submodule
or a finite-dimensional quotient. Dualizing if needed we can assume that $\pi$ has
a submodule and then Corollary \ref{cor:AFinSub} implies that
one of the conditions holds.
\end{proof}

For the proof of Theorem \ref{thm:mon} we will need the following lemma.

\begin{lem}
\label{lem:AppFinDim}If $\chi_{}\times 1$ has a finite-dimensional
irreducible subquotient $\phi$, then either $\phi$ is a quotient and $s(\chi)>0$ or $\phi$ is
a submodule and $s(\chi)<0$.
\end{lem}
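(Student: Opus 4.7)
The plan is to use uniqueness of the finite-dimensional subquotient together with Lemma \ref{lem:duality} to reduce to producing a finite-dimensional submodule of $\chi\times 1$ when $s(\chi)\le 0$. By Lemma \ref{lem:duality} we have $(\chi\times 1)^\vee\cong \chi^{-1}\times 1$ with $s(\chi^{-1})=-s(\chi)$, exchanging submodules and quotients, so it suffices to prove the following: if $s(\chi)\le 0$ and $\chi\times 1$ has a finite-dimensional irreducible subquotient $\phi$, then $s(\chi)<0$ and $\phi$ is a submodule. By Corollaries \ref{cor:UniqFinDimSQ} and \ref{cor:pre1dimpadic}, $\phi$ is the unique finite-dimensional irreducible subquotient of $\chi\times 1$, so it is enough to produce \emph{some} finite-dimensional irreducible submodule.

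In the non-archimedean case this is immediate from Corollary \ref{cor:pre1dimpadic}: the existence of $\phi$ forces $\chi=\nu^{\pm n/2}$, and the constraint $s(\chi)\le 0$ selects $\chi=\nu^{-n/2}$, giving the submodule $\nu^{-p_2/2}$.

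In the archimedean case I would compare infinitesimal characters. By Lemma \ref{InfCharInd}, the infinitesimal character of $\chi\times 1$ equals $\xi_{p_1}^{s(\chi)}\sqcup\xi_{p_2}^0$ (with the analogous doubled version for $F=\C$), while the infinitesimal character of the finite-dimensional $\phi$ is $\mu+\rho$ for a dominant integral weight $\mu$; in particular this multiset consists of pairwise distinct complex numbers with integer gaps. A short direct computation then shows that $s(\chi)-n/2\in\Z$ and $|s(\chi)|\ge n/2$, which combined with $s(\chi)\le 0$ forces $s(\chi)\le -n/2<0$. Subtracting $\rho$ from the sorted multiset yields an explicit dominant integral highest weight, determining $\phi$ up to an $\eps$- (resp.\ $\alpha$-) twist; this remaining twist is pinned down by matching $K$-types of $\phi$ and $\chi\times 1$ via Lemma \ref{lem:signs}(iii). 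In every case the resulting $\chi$ falls into the list of Corollary \ref{cor:AFinSub}(i)--(iii), and that corollary then produces the required finite-dimensional submodule.

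\textbf{Main obstacle.} The delicate step is fixing the $\eps$- or $\alpha$-twist in the archimedean case: the infinitesimal character alone determines $\phi$ only up to this twist, and one must use $K$-type matching (Lemma \ref{lem:signs}) to confirm that $\chi$ exactly matches the explicit form of Corollary \ref{cor:AFinSub}(ii)/(iii) rather than some sign-twisted variant.
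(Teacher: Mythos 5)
Your overall strategy is the paper's: reduce by Lemma \ref{lem:duality} to the case $s(\chi)\le 0$, handle the non-archimedean case by Corollary \ref{cor:pre1dimpadic}, use the infinitesimal character of $\chi\times 1$ (a generalized segment, resp.\ a pair of them) plus the parity statement of Lemma \ref{lem:signs}(iii) to place $\chi$ in the list of Corollary \ref{cor:AFinSub}, and then identify the resulting finite-dimensional submodule with $\phi$ by the uniqueness of Corollary \ref{cor:UniqFinDimSQ}. For $F=\mathbb{R}$ and for non-archimedean $F$ your sketch matches the paper's proof (over $\mathbb{R}$ you will, as the paper does, have to realize $\phi$ as a submodule of some $\psi\times 1$ with $\psi$ a character of $G_{p_1}$ in order to have two induced representations sharing a $K$-type, which is the hypothesis of Lemma \ref{lem:signs}(iii)).

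There is, however, a genuine gap in your treatment of $F=\mathbb{C}$. Writing $\chi=\alpha^{m}\nu^{s}$, the infinitesimal character of $\chi\times 1$ is $(\xi_{p_1}^{s+m}\sqcup\xi_{p_2}^{0},\ \xi_{p_1}^{s-m}\sqcup\xi_{p_2}^{0})$, and regularity plus integrality of each component gives only $|s+m|\ge n/2$ and $|s-m|\ge n/2$, \emph{not} $|s(\chi)|\ge n/2$ as you claim: the ``mixed'' case $s+m\ge n/2$, $s-m\le -n/2$ (e.g.\ $s=0$, $m=n/2$, or $s=-n/2$, $m=n$) is perfectly compatible with the infinitesimal character being a pair of regular generalized segments, yet such $\chi$ does not lie in the list of Corollary \ref{cor:AFinSub}(iii), which requires both $s+m\le -n/2$ and $s-m\le -n/2$. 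Your proposed remedy, fixing a twist via Lemma \ref{lem:signs}(iii), does not apply: that lemma is stated only for $F=\mathbb{R}$ and concerns the sign character $\eps$, while over $\mathbb{C}$ the infinitesimal character already determines $\chi$ (there is no residual $\alpha$-twist ambiguity) and the problem is precisely the mixed case above. The paper closes this case by invoking Theorem \ref{thm:irr}: in the mixed regime condition (iv) fails for both $\chi$ and $\chi^{-1}$, so $\chi\times 1$ is irreducible and infinite-dimensional, contradicting the existence of the finite-dimensional subquotient $\phi$; once the mixed case is excluded, $s\le -n/2<0$ and Corollary \ref{cor:AFinSub}(iii) produces the required submodule. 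You need this (or some substitute, e.g.\ a $U(n)$-type argument) to make the complex case go through.
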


\begin{proof}
For non-archimedean $F$ this follows from Corollary \ref{cor:pre1dimpadic}, and thus
we assume that $F$ is archimedean and the infinitesimal character $\xi$ is a
generalized segment (if $F=\mathbb{R}$) or a pair of generalized segments (if
$F=\mathbb{C)}$. Assume (dualizing if needed) that $s(\chi)< 0$. If $F=\mathbb{C}$ then Theorem \ref{thm:irr} and Corollary
\ref{cor:AFinSub} imply that $\chi\times 1$ has a finite-dimensional
submodule $\phi^{\prime}$. If $F=\mathbb{R}$ we let $\chi=\eps^k\nu^s$ and note that there exists a character $\psi=\eps^l\nu^t$ of $G_{p_1}$ such that $\phi$ can be realized as a submodule of
$\psi\times 1$. This implies that $s+n/2,t+n/2,
\text{ and }l$ have the same parity. By Lemma
\ref{lem:signs}(\ref{it:signs}) $k$ also has the
same parity, and thus, by Corollary \ref{cor:AFinSub}, $\chi\times1$
has a finite-dimensional submodule $\phi^{\prime}$. By Corollary
\ref{cor:UniqFinDimSQ} $\phi^{\prime}$ must coincide with $\phi$.
\end{proof}

\begin{proof}
[Proof of Theorem \ref{thm:mon}]We can assume that $s(\chi)>0$. Then it follows from the previous lemma that $\pi$ has
at most one irreducible finite-dimensional subquotient $\phi$, which is
necessary a quotient. Now, let $\tau_{1},\dots,\tau_{k}$ be the other
irreducible constituents. Then $\Phi(\tau_{1}),\dots,\Phi(\tau_{k})$ are
non-zero (not necessary irreducible) constituents of $\chi_{1}^{\prime}%
\times\chi_{2}^{\prime}$. Arguing by induction, we can assume that
$rank(\Phi(\tau_{1}))>\cdots> rank(\Phi(\tau_{k}))$ and thus $rank(\tau
_{1})>\cdots> rank(\tau_{k})>0=rank(\phi)$.
\end{proof}

\begin{lem}
\label{lem:padic} If $F$ is non-Archimedean then $\chi\times 1$  has length
1 or 2.
\end{lem}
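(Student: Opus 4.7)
The plan is to induct on $r=\min(p_1,p_2)$, using the exactness of the functor $\Phi$ (Proposition \ref{prop:der}) together with the uniqueness statement of Corollary \ref{cor:pre1dimpadic}.

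For the base case $r=1$, the representation $\Phi(\chi\times 1)$ is one-dimensional, hence of length at most $1$. Since every infinite-dimensional irreducible constituent of $\chi\times 1$ has non-zero image under $\Phi$ (Proposition \ref{prop:der}), at most one constituent of $\chi\times 1$ can be infinite-dimensional. Combined with Corollary \ref{cor:pre1dimpadic}, which implies that $\chi\times 1$ has at most one one-dimensional subquotient, the length is at most $2$.

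For the inductive step, let $k$ denote the length of $\chi\times 1$ and $m$ the number of its one-dimensional irreducible constituents; by Corollary \ref{cor:pre1dimpadic} we have $m\leq 1$. Applying $\Phi$ to a composition series $\sigma_1,\dots,\sigma_k$ yields a filtration of $\Phi(\chi\times 1)=\chi'\times 1$ whose successive subquotients are $\Phi(\sigma_i)$; the one-dimensional $\sigma_i$ contribute zero, while the remaining $k-m$ constituents each contribute a non-zero subquotient. Hence $\mathrm{length}(\chi'\times 1)\geq k-m$. The inductive hypothesis (applied on $GL_{n-2}$, where the rank parameter has dropped to $r-1$) bounds this length by $2$, giving $k\leq m+2\leq 3$.

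The main obstacle is ruling out $k=3$. This would force $m=1$ together with $\mathrm{length}(\chi'\times 1)=2$. The condition $m=1$ forces $\chi=\nu^{\pm n/2}$ by Corollary \ref{cor:pre1dimpadic}, so $\chi'=\nu^{\pm n/2}$ as a character of $G_{p_1-1}$ (or $G_{p_2-1}$). A direct check using Theorem \ref{thm:irr}(\ref{it:irrmixed}) applied to $\chi'\times 1$ with parameters $(n-2,p_1-1,p_2-1)$ shows that the equations $\pm n/2 = \pm(k'-(n-2)/2)$ admit no integer solution $k'$ with $0\leq k'\leq r-2$ (the only candidates are $k'=n-1$ and $k'=-1$, neither in range since $r\leq n/2$). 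Hence $\chi'\times 1$ is irreducible, contradicting $\mathrm{length}(\chi'\times 1)=2$, and completing the induction.
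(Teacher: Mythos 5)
Your proposal is correct and follows essentially the same route as the paper: induction on $r$ via the exact functor $\Phi$, using Corollary \ref{cor:pre1dimpadic} to bound the one-dimensional constituents and Theorem \ref{thm:irr} to see that $\chi'\times 1$ is irreducible when $\chi=\nu^{\pm n/2}$. The paper organizes this as a case split on whether $\chi=\nu^{\pm n/2}$ rather than your counting inequality $k\leq m+2$ with the extremal case ruled out, but the ingredients and logic are the same (you merely make explicit the verification, via Theorem \ref{thm:irr}(\ref{it:irrmixed}), that the paper leaves as an assertion).
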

\begin{proof}
We prove by induction on $r$. The base case
$r=0$ is trivial.

If $\chi\neq\nu^{\pm n/2}$ then, by
Corollary \ref{cor:pre1dimpadic}, $\chi\times 1$ has no
finite-dimensional constituents. Applying the functor $\Phi$ and using Proposition
\ref{prop:der} we obtain $\operatorname{length}(\chi\times 1)
\leq\operatorname{length}(\chi^{\prime}\times 1)$. The
induction hypotheses implies that $\operatorname{length}(\chi^{\prime}%
\times 1) \leq2$.

Now consider the case $\chi=\nu^{\pm n/2}$. Theorem \ref{thm:irr} implies that $\chi'\times 1$ is irreducible and thus $\chi\times 1$ has a unique
infinite-dimensional irreducible constituent. By Corollary
\ref{cor:pre1dimpadic} it also has a unique one-dimensional constituent and thus has length two.
\end{proof}

\Dima{
\begin{proof}
[Proof of Corollary \ref{cor:IntOp}]
From Theorem \ref{thm:irr} we see that if $\chi\times 1$ is irreducible then so is $1\times \chi$ and thus, by Schur's first lemma, $I_{\chi}$ is an isomorphism.

Assume now that $\chi\times 1$ (and thus $1\times \chi$) are reducible.
 Note that $1\times \chi=\chi\otimes(\chi^{-1}\times 1)$ and $s(\chi^{-1})=-s(\chi)$. Thus by Theorem \ref{thm:irr} the irreducible constituents of $\chi\times 1$ and of $1\times \chi$ appear in opposite orders of rank. Let $\sigma$ denote the image of $I_{\chi}$. Note that $\sigma$ is a quotient $\chi\times 1$ and a subrepresentation of $1\times \chi$. Thus the irreducible constituents
of $\sigma$ appear on one hand in descending order of rank and on the other hand in ascending order of rank. This implies that there is only one constituent, {\it i.e.} $\sigma$ is irreducible. This implies in particular that $I_{\chi}$ is not an isomorphism.
\end{proof}
}
%\begin{remark}
%In the archimedean case the statements of this section were proven in
%\cite{HL}, using $K$-type analyses. In the non-archimedean case the statements
%can be deduced from \cite{Zel}. However, we wanted to present a short, uniform
%and self-contained proof, based on the theory of derivatives.
%\end{remark}

%+Bibliography

%-Bibliography

\end{document}